\newtheorem{thm}{Theorem}[section]
\newtheorem{lem}[thm]{Lemma}
\newtheorem{prop}[thm]{Proposition}
\newtheorem{cor}[thm]{Corollary}
\theoremstyle{definition}
\newtheorem{dfn}[thm]{Definition}
\newtheorem{ques}[thm]{Question}
\newtheorem{rem}[thm]{Remark}
\theoremstyle{remark}
\newtheorem*{claim*}{Claim}
\newtheorem*{ac}{Acknowlegments}
\numberwithin{equation}{thm}
\def\ab{(\mathbf{ab})}
\def\add{\operatorname{add}}
\def\ass{\operatorname{Ass}}
\def\C{\mathcal{C}}
\def\ch{\operatorname{char}}
\def\ci{(\mathbf{ci})}
\def\cm{(\mathbf{cm})}
\def\cone{\operatorname{cone}}
\def\dep{(\mathbf{dep})}
\def\depth{\operatorname{depth}}
\def\E{\mathbf{E}}
\def\EE{\operatorname{E}}
\def\ee{(\mathbf{ee})}
\def\Ext{\operatorname{Ext}}
\def\gap{(\mathbf{gap})}
\def\gdim{\operatorname{Gdim}}
\def\ge{\geqslant}
\def\gor{(\mathbf{gor})}
\def\Hom{\operatorname{Hom}}
\def\id{\operatorname{id}}
\def\K{\operatorname{\mathcal{K}}}
\def\k{\operatorname{K}}
\def\ka{\operatorname{K_{ac}}}
\def\kt{\operatorname{K_{tac}}}
\def\le{\leqslant}
\def\lten{\otimes^\mathbf{L}}
\def\M{\mathfrak{M}}
\def\m{\mathfrak{m}}
\def\mod{\operatorname{mod}}
\def\nf{\operatorname{NF}}
\def\P{\mathbf{P}}
\def\p{\mathfrak{p}}
\def\Proj{\operatorname{Proj}}
\def\proj{\operatorname{proj}}
\def\Q{\mathbf{Q}}
\def\res{\operatorname{res}}
\def\spec{\operatorname{Spec}}
\def\syz{\Omega}
\def\tac{(\mathbf{tac})}
\def\te{(\mathbf{te})}
\def\Tor{\operatorname{Tor}}
\def\TR{(\mathbf{tr})}
\def\tr{\operatorname{Tr}}
\def\X{\mathcal{X}}
\def\xx{\boldsymbol{x}}
\begin{document}
\title[On the vanishing of Ext modules over a local UFD with an isolated singularity]{On the vanishing of Ext modules over a local unique factorization domain with an isolated singularity}
\author{Kaito Kimura}
\address[Kimura]{Graduate School of Mathematics, Nagoya University, Furocho, Chikusaku, Nagoya 464-8602, Japan}
\email{m21018b@math.nagoya-u.ac.jp}
\author{Justin Lyle}
\address[Lyle]{Department of Mathematical Sciences, University of Arkansas, 525 Old Main, Fayetteville, AR 72701, USA}
\email{jlyle106@gmail.com}
\urladdr{https://jlyle42.github.io/justinlyle/}
\author{Yuya Otake}
\address[Otake]{Graduate School of Mathematics, Nagoya University, Furocho, Chikusaku, Nagoya 464-8602, Japan}
\email{m21012v@math.nagoya-u.ac.jp}
\author{Ryo Takahashi}
\address[Takahashi]{Graduate School of Mathematics, Nagoya University, Furocho, Chikusaku, Nagoya 464-8602, Japan}
\email{takahashi@math.nagoya-u.ac.jp}
\urladdr{https://www.math.nagoya-u.ac.jp/~takahashi/}
\subjclass{13D07, 13F15}
\keywords{vanishing of Ext/Tor, unique factorization domain (UFD), (integral) domain, isolated singularity, AB ring, totally reflexive module, totally acyclic complex, Gorenstein ring, Cohen--Macaulay ring, depth formula}
\thanks{Kimura was partly supported by Grant-in-Aid for JSPS Fellows 23KJ1117. Otake was partly supported by Grant-in-Aid for JSPS Fellows 23KJ1119. Takahashi was partly supported by JSPS Grant-in-Aid for Scientific Research 23K03070}
\begin{abstract}
This paper provides a method to get a noetherian equicharacteristic local UFD with an isolated singularity from a given noetherian complete equicharacteristic local ring, preserving certain properties.
This is applied to invesitgate the (non)vanishing of Ext modules.
It is proved that there exist a Gorenstein local UFD $A$ having an isolated singularity such that $\Ext_A^{\gg0}(M,N)=0$ does not imply $\Ext_A^{\gg0}(N,M)=0$, a Gorenstein local UFD $B$ having an isolated singularity such that $\Tor_{>0}^B(M,N)=0$ does not imply $\depth(M\otimes_BN)=\depth M+\depth N-\depth B$, and a Cohen--Macaulay local UFD $C$ having an isolated singularity such that $\Ext_C^{>0}(M,C)=0$ does not imply the total reflexivity of $M$.
\end{abstract}
\maketitle
\section{Introduction}

Throughout the present paper, we assume that all rings are commutative noetherian rings with identity.
For a local ring $R$ we denote by $\m_R$ the unique maximal ideal of $R$.

Let $\P$ and $\Q$ be properties of local rings such that the following implications hold for each local ring $R$:
\begin{equation}\label{6}
\begin{array}{rlrll}
\text{$R[\![X]\!]$ satisfies $\P$}
&\Longrightarrow&\text{$R$ satisfies $\P$}
&\Longrightarrow&\text{$\widehat R$ satisfies $\P$}\,,\\
\text{$R[\![X]\!]$ satisfies $\Q$}
&\Longleftarrow&\text{$R$ satisfies $\Q$}
&\Longleftarrow&\text{$\widehat R$ satisfies $\Q$}\,.
\end{array}
\end{equation}
Here, $R[\![X]\!]$ denotes a formal power series ring over $R$, while $\widehat R$ stands for the $\m_R$-adic completion of $R$.
The main result of this paper is the following theorem concerning rings that do not satisfy $\P$ but do satisfy $\Q$.

\begin{thm}\label{0}
Suppose that there exists an equicharacteristic complete local ring $(A,\m_A,k)$ of depth $\alpha$ which does not satisfy $\P$ but satisfies $\Q$.
Let $\rho\ge\max\{2,\alpha\}$ and $\sigma\ge\max\{1,\alpha\}$ be integers.
Then there exist:
\begin{enumerate}[\rm(1)]
\item
an equicharacteristic local unique factorization domain $(R,\m_R,k)$ of depth $\rho$ with an isolated singularity which does not satisfy $\P$ but satisfies $\Q$, and
\item
an equicharacteristic local domain $(S,\m_S,k)$ of depth $\sigma$ with an isolated singularity which does not satisfy $\P$ but satisfies $\Q$.
\end{enumerate}
\end{thm}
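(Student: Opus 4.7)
The plan is to reduce the statement to a realization-of-completion problem: given $A$, construct a local UFD $R$ (resp.\ local domain $S$) with an isolated singularity whose completion is a suitable formal power series extension of $A$, and then invoke the transfer properties \eqref{6} of $\P$ and $\Q$ to conclude.

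For part (1), set $n:=\rho-\alpha\ge 0$ and let $T:=A[\![X_1,\dots,X_n]\!]$. Then $T$ is a complete equicharacteristic local ring with residue field $k$ and $\depth T=\rho$. Iterating the implication ``$R[\![X]\!]$ satisfies $\P$ $\Rightarrow$ $R$ satisfies $\P$'' contrapositively shows that $T$ does not satisfy $\P$, and iterating the reverse implication for $\Q$ shows that $T$ does satisfy $\Q$. The same applies in part (2) with $n:=\sigma-\alpha\ge 0$. Thus the theorem is reduced to realizing $T$ as the completion of a local UFD (resp.\ local domain) with an isolated singularity: granted such a realization $\widehat R\cong T$, the final implication ``$R$ satisfies $\P$ $\Rightarrow$ $\widehat R$ satisfies $\P$'' gives by contrapositive that $R$ does not satisfy $\P$, and the reverse implication for $\Q$ gives that $R$ satisfies $\Q$; similarly for $S$. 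The residue fields come out equal to $k$ for free.

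The heart of the argument is the realization itself. Since $\depth T=\rho\ge 2$ in case (1) and $T$ is equicharacteristic, Heitmann's theorem on realizing complete local rings of depth at least two as completions of local UFDs (the usual arithmetic ``no-integer-zerodivisor'' hypothesis being automatic for equicharacteristic rings) produces at least one local UFD whose completion is $T$. One must then refine the construction so that in addition every non-maximal prime becomes regular; this can be attempted either by localizing Heitmann's UFD at a suitable prime until the singular locus collapses to the maximal ideal, or by invoking a strengthened realization theorem (in the spirit of results of Loepp, Rotthaus, and others) in which the singular locus is prescribed inside the fibers of $R\to\widehat R$. In case (2) the analogous realization result for local domains---which needs only $\depth T=\sigma\ge 1$---provides $S$.

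I expect the main obstacle to be the simultaneous control of the three desiderata: UFD (resp.\ domain), isolated singularity, and prescribed completion. Each is well understood in isolation, but balancing them is delicate, particularly because adjusting a Heitmann-type construction to kill non-maximal singular primes can easily destroy the UFD property. The precise numerical bounds $\rho\ge\max\{2,\alpha\}$ and $\sigma\ge\max\{1,\alpha\}$ in the hypothesis mirror exactly the depth thresholds required by the UFD and domain realization theorems, which suggests that these hypotheses are what the construction is designed to exploit. Once the realization is secured, the transfer of $\P$ and $\Q$ through \eqref{6} is immediate.
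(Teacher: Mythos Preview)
Your overall strategy is exactly the paper's: pass to $T=A[\![X_1,\dots,X_n]\!]$, verify it has the right depth and still fails $\P$ and satisfies $\Q$ via \eqref{6}, then realize $T$ as a completion and transfer back. The one substantive gap is your uncertainty about the realization step. Heitmann's Main Theorem (the reference \cite{He} in the paper) already produces, in a single stroke, a local UFD \emph{with an isolated singularity} whose completion is the given complete local ring, under precisely the hypotheses you identified (depth at least $2$ and no integer in the ring is a zerodivisor, the latter being automatic in equicharacteristic). The same theorem handles the domain case when the depth is at least $1$. So there is no need to ``refine the construction,'' localize, or appeal to separate results of Loepp or Rotthaus; your worried paragraph about balancing the three desiderata dissolves once you know the full strength of Heitmann's result. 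With that in hand, your proof is complete and coincides with the paper's.
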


In this paper, we are interested in applying the above theorem to investigate the vanishing of Ext modules over a local (unique factorization) domain with an isolated singularity.

A ring $R$ is said to satisfy $\ee$ if for all finitely generated $R$-modules $M$ and $N$ such that $\Ext_R^{\gg0}(M,N)=0$ one has $\Ext_R^{\gg0}(N,M)=0$.
Huneke and Jorgensen \cite{HJ} introduce the notion of an {\em AB ring}, and prove that any AB ring satisfies $\ee$.
The relationships of $\ee$ with several other properties are described in \cite[(4.15)]{JS5}.

We say that $R$ satisfies $\dep$ if all finitely generated $R$-modules $M$ and $N$ with $\Tor_{>0}^R(M,N)=0$ satisfy the {\em depth formula}, i.e.,
$$
\depth(M\otimes_RN)=\depth M+\depth N-\depth R.
$$
Christensen and Jorgensen \cite{CJ} prove that every AB ring satisfies $\dep$.
On the other hand, it has been an open question for several decades now whether every local ring, or even every Gorenstein local ring, satisfies $\dep$, and much work has been put towards providing sufficient conditions for $\dep$ to hold; see \cite{AY,CL,CJ,ST} for but a few examples, and see the introduction of \cite{BJ} for an overview on the history of this problem. In this work, we provide a negative answer to this question. Our result in this direction comes as a consequence of the following theorem.

\begin{thm}\label{27}
A Gorenstein local ring of positive dimension satisfies $\dep$ if and only if it is an AB ring.
\end{thm}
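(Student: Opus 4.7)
The forward implication, that every AB ring satisfies $\dep$, is the content of \cite{CJ} and is already noted above, so I focus on the converse. Assume $R$ is Gorenstein local of dimension $d\ge1$ satisfying $\dep$; I want to show $R$ is AB. I would first invoke the usual Gorenstein reformulation of the AB property: $R$ is AB if and only if for every pair of maximal Cohen--Macaulay (equivalently, totally reflexive) $R$-modules $M,N$, the vanishing $\Ext_R^{\gg0}(M,N)=0$ implies $\Ext_R^{>0}(M,N)=0$; this comes from the standard syzygy/cosyzygy reduction, which works inside the MCM category of a Gorenstein ring. Using the Auslander--Bridger transpose $\tr M$ (again MCM) and Tate-cohomology duality, the problem translates into a Tor-rigidity statement: for MCM modules $X,Y$ over $R$, $\Tor_{\gg0}^R(X,Y)=0$ implies $\Tor_{>0}^R(X,Y)=0$.

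To prove this, I would argue by descending induction on $s=\sup\{i\ge1:\Tor_i^R(X,Y)\ne0\}$, which is finite by hypothesis; if $s=0$ there is nothing to do. If $s\ge1$, set $L=\syz^{s-1}X$; then $L$ is MCM, $\Tor_1^R(L,Y)\ne0$, and $\Tor_{>1}^R(L,Y)=0$. In particular $\Tor_{>0}^R(\syz L,Y)=0$, so applying $\dep$ to the pair $(\syz L,Y)$ yields $\depth(\syz L\otimes_R Y)=d$. Tensoring the short exact sequence $0\to\syz L\to F\to L\to0$ with $Y$ produces the four-term exact sequence
\begin{equation*}
0\to \Tor_1^R(L,Y)\to \syz L\otimes_R Y\to F\otimes_R Y\to L\otimes_R Y\to0,
\end{equation*}
and the goal is to combine the depth equality for $\syz L\otimes_R Y$ with the positivity of $d$ to force $\Tor_1^R(L,Y)=0$, contradicting the choice of $s$.

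The main obstacle is this final step: a direct depth-lemma chase on the four-term sequence only yields $\depth \Tor_1^R(L,Y)\ge1$, which is not absurd on its own. What is needed is a finer depth comparison---most plausibly a second application of $\dep$ to an auxiliary pair built from the cokernel splitting the four-term sequence into two short exact sequences, combined with the equality $\depth(\syz L\otimes_R Y)=d$---that forces an incompatibility between $\Tor_1^R(L,Y)$ being a nonzero submodule of an MCM module and the structural constraints imposed by the map into the free module $F\otimes_R Y$. The hypothesis $d\ge1$ enters precisely at this step, as one would expect: in dimension zero $\dep$ is vacuous while AB is a genuine restriction, so any argument must break in that case.
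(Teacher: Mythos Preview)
Your reduction to maximal Cohen--Macaulay modules is fine, and applying $\dep$ to $(\syz L,Y)$ to get $\depth(\syz L\otimes_RY)=d$ is correct. But the gap you yourself flag is real, and the ``finer depth comparison'' you gesture at does not exist. From the four-term sequence, knowing that $\syz L\otimes_RY$ and $F\otimes_RY$ are MCM only yields $\depth\Tor_1^R(L,Y)\ge1$ and $\depth(L\otimes_RY)\ge d-1$; neither contradicts $\Tor_1^R(L,Y)\ne0$. A second application of $\dep$ is unavailable: you have no Tor-vanishing for $(L,Y)$, and the intermediate cokernel in the four-term sequence is not a tensor product of modules with vanishing Tor. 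Nothing in the syzygy sequence forces the nonzero submodule $\Tor_1^R(L,Y)$ of an MCM module to vanish. So the argument, as written, does not close.

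The paper fills exactly this hole, but with two ingredients absent from your sketch. First, it does \emph{not} work with an arbitrary MCM module: it passes from $K=\syz^{n-1}M$ to $L=\syz^d(K/\xx K)$ for a system of parameters $\xx$, so that $L$ is MCM \emph{and locally free on the punctured spectrum}. That extra hypothesis is what makes the second ingredient available: by \cite[Lemma~3.5(2)]{KOT} one has $\Tor_i^R(L,N^\dag)\cong\Ext_R^{d+i}(L,N)^\vee$, so $\Ext_R^{>1}(L,N)=0$ gives $\Tor_{>0}^R(L,N^\dag)=0$; then $\dep$ makes $L\otimes_RN^\dag$ MCM; and finally \cite[Proposition~2.5]{KOT} converts ``$\Tor_{>0}^R(L,N^\dag)=0$ and $L\otimes_RN^\dag$ MCM'' back into $\Ext_R^i(L,N)=0$ for $1\le i\le d$, contradicting $\Ext_R^1(L,N)\ne0$. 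This last proposition is the substantive step that upgrades ``tensor product is MCM'' to the rigidity you want; it is not a depth-lemma chase, and it genuinely requires the punctured-spectrum reduction you omit.
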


The property $\ee$ implies Gorensteinness.
A result that negates the converse is given by Jorgensen and \c{S}ega \cite{JS5}, where they construct, for a field $k$ that is not algebraic over a finite field, an artinian Gorenstein equicharacteristic local ring $(A,\m_A,k)$ which does not satisfy the property $\ee$.
(Note that this also shows that the ``only if'' part of Theorem \ref{27} does not necessarily hold true without the assumption that the ring has positive dimension.)
One can apply Theorem \ref{0} to this local ring $A$ and the properties $\P=\ee$ and $\Q=\text{Gorensteinness}$ and use Theorem \ref{27} to obtain the following corollary. 

\begin{cor}\label{4}
Let $k$ be a field which is not algebraic over a finite field.
\begin{enumerate}[\rm(1)]
\item
For every $d\ge2$, there is a $d$-dimensional Gorenstein equicharacteristic local unique factorization domain $(R,\m_R,k)$ with an isolated singularity which does not satisfy $\ee$.
Hence, $R$ does not satisfy $\dep$.
\item
There exists a $1$-dimensional Gorenstein equicharacteristic local domain $(S,\m_S,k)$ that does not satisfy $\ee$.
Hence, $S$ does not satisfy $\dep$.
\end{enumerate}
In particular, $R$ is a non-AB unique factorization domain, and $S$ is a non-AB domain of dimension one.
\end{cor}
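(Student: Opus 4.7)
The plan is a direct application of Theorem \ref{0} to the artinian example of Jorgensen--\c{S}ega, combined with Theorem \ref{27} and the Huneke--Jorgensen theorem that every AB ring satisfies $\ee$. First I would verify that the pair $(\P,\Q)=(\ee,\text{Gorensteinness})$ satisfies the ascent/descent hypotheses displayed in (\ref{6}). For Gorensteinness both directions are classical: $R$ Gorenstein implies $R[\![X]\!]$ Gorenstein, and $\widehat R$ Gorenstein implies $R$ Gorenstein. For $\ee$ one must check that the property descends from $R[\![X]\!]$ to $R$ and ascends from $R$ to $\widehat R$; these are standard observations about vanishing of $\Ext$ under flat base change and $\m$-adic completion of finitely generated modules, and are used routinely in the AB-ring literature.

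Next, I would invoke the Jorgensen--\c{S}ega construction \cite{JS5}: for a field $k$ not algebraic over a finite field there exists an artinian Gorenstein equicharacteristic local ring $(A,\m_A,k)$ failing $\ee$. Being artinian, $A$ is automatically complete and has $\depth A=0$, so the hypotheses of Theorem \ref{0} are met with $\alpha=0$. For part (1), pick $d\ge 2$ and set $\rho=d\ge\max\{2,0\}$; Theorem \ref{0}(1) then produces an equicharacteristic local UFD $(R,\m_R,k)$ of depth $d$ with an isolated singularity which is Gorenstein but fails $\ee$. As Gorenstein local rings are Cohen--Macaulay, $\dim R=\depth R=d$. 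For part (2), set $\sigma=1\ge\max\{1,0\}$ in Theorem \ref{0}(2) to obtain a one-dimensional Gorenstein local domain $(S,\m_S,k)$ with an isolated singularity failing $\ee$.

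The ``hence'' clauses and the ``in particular'' statement follow from a short chain of implications: AB implies $\ee$ (Huneke--Jorgensen \cite{HJ}), so the failure of $\ee$ in $R$ and in $S$ forces both rings to be non-AB; and since $R$ and $S$ are both Gorenstein of positive dimension, Theorem \ref{27} then upgrades ``non-AB'' to ``fails $\dep$''.

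The main (and admittedly modest) obstacle is verifying the ascent/descent behaviour of $\ee$ in (\ref{6}); once that is secured, the rest of the proof is a bookkeeping exercise combining Theorem \ref{0}, Theorem \ref{27}, and the Huneke--Jorgensen implication AB $\Rightarrow\ee$. No new module-theoretic input is required beyond what is already produced earlier in the paper.
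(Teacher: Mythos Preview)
Your proposal is correct and follows essentially the same route as the paper: invoke the Jorgensen--\c{S}ega artinian example, apply Theorem~\ref{0} with $\P=\ee$ and $\Q=\gor$, and deduce the failure of $\dep$ via Theorem~\ref{27} together with the implication $\ab\Rightarrow\ee$. The only refinement is that the ascent/descent of $\ee$ in \eqref{6}---which you describe as ``standard observations''---is precisely the content of Proposition~\ref{7} (packaged in Theorem~\ref{14}); the ascent $R\Rightarrow\widehat R$ in particular is not entirely routine, so you should cite Theorem~\ref{14} rather than wave at the AB-ring literature.
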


We say that $R$ satisfies $\TR$ if a finitely generated $R$-module $M$ is totally reflexive whenever $\Ext_R^{>0}(M,R)=0$; recall that $M$ is called {\em totally reflexive} if the natural map $M\to M^{\ast\ast}$ is an isomorphism and $\Ext_R^{>0}(M,R)=\Ext_R^{>0}(M^\ast,R)=0$, where $(-)^\ast$ is the $R$-dual functor.
The property $\TR$ is the same as the {\em weakly Gorenstein} property in the sense of Ringel and Zhang \cite{RZ}.
A (chain) complex of projective $R$-modules is called {\em totally acyclic} if it and its $R$-dual are both acyclic.
We say that $R$ satisfies $\tac$ if every acyclic complex of finitely generated projective $R$-modules is totally acyclic.
This is a finitely generated module version of the property studied by Iyengar and Krause \cite{IK}.
The following describes the relationships between $\TR$, $\tac$ and $\dep$.

\begin{thm}\label{29}
A local ring satisfying $\tac$ satisfies $\TR$.
A Cohen--Macaulay local ring of positive dimension satisfying $\dep$ satisfies $\TR$.
\end{thm}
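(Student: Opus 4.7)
The plan treats the two assertions separately. For the first, fix a finitely generated $M$ with $\Ext_R^{>0}(M,R)=0$ and take projective resolutions $P_\bullet\to M$ and $Q_\bullet\to M^\ast$ by finitely generated projectives. The Ext-vanishing hypothesis makes the dualized sequence $0\to M^\ast\to P_0^\ast\to P_1^\ast\to\cdots$ exact, so splicing with $Q_\bullet$ along the composite $Q_0\twoheadrightarrow M^\ast\hookrightarrow P_0^\ast$ yields an acyclic complex
$$X\colon\ \cdots\to Q_1\to Q_0\to P_0^\ast\to P_1^\ast\to\cdots$$
of finitely generated projectives. By $\tac$, the dual $X^\ast$ is acyclic as well. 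Identifying $P_i^{\ast\ast}\cong P_i$, the complex $X^\ast$ reads $\cdots\to P_1\to P_0\to Q_0^\ast\to Q_1^\ast\to\cdots$, and by naturality of biduality its central differential factors as $P_0\twoheadrightarrow M\to M^{\ast\ast}\hookrightarrow Q_0^\ast$ with canonical middle map. Acyclicity of $X^\ast$ at each $Q_i^\ast$ for $i\ge 1$ yields $\Ext_R^{>0}(M^\ast,R)=0$, which in turn identifies $\ker(Q_0^\ast\to Q_1^\ast)$ with $M^{\ast\ast}$; acyclicity at $P_0$ and at $Q_0^\ast$ then forces the canonical map $M\to M^{\ast\ast}$ to be bijective, so $M$ is totally reflexive.

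For the second assertion, my plan is to prove that under the Cohen--Macaulay and positive-dimension hypotheses, $\dep$ implies $\tac$, and then to invoke the first assertion. Let $X$ be an acyclic complex of finitely generated projectives with syzygies $Z_i$. Iterated dimension shifting along $0\to Z_{i+1}\to X_{i+1}\to Z_i\to 0$ yields $\Tor_j^R(Z_i,N)\cong \Tor_{j+k}^R(Z_{i-k},N)$ for every $k\ge 0$; taking $k$ larger than the projective dimension of $N$ shows $\Tor_{>0}^R(Z_i,N)=0$ whenever $N$ has finite projective dimension. Applying $\dep$ to the pair $(Z_i,R/(\xx))$ for a maximal $R$-regular sequence $\xx$, the depth formula together with $\depth R/(\xx)=0$ and $\depth R=\dim R$ forces $\depth Z_i\ge\dim R$, so every nonzero $Z_i$ is maximal Cohen--Macaulay. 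The remaining step is to promote this to $\Ext_R^{>0}(Z_i,R)=0$ (equivalently, to the acyclicity of $X^\ast$); for this I would plug carefully chosen test modules into the Auslander--Bridger four-term exact sequence
$$0\to\Tor_2^R(\tr Z_i,N)\to Z_i^\ast\otimes_R N\to\Hom_R(Z_i,N)\to\Tor_1^R(\tr Z_i,N)\to 0,$$
using the already-known Tor vanishing together with the depth formula to pin down the middle terms and extract Ext vanishing.

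The main obstacle is precisely this final Tor-to-Ext translation. I anticipate that the correct $N$ will be either another syzygy $Z_j$ of the same complex or, after descending to the completion $\widehat R$ (where a canonical module is available), a syzygy of the canonical module; the depth formula, combined with the MCM-ness of each $Z_i$ and the positivity of $\dim R$ (needed to preclude the artinian exception flagged at Theorem \ref{27}), should then squeeze out the desired Ext vanishing. Once established, the first assertion delivers total reflexivity of each $Z_i$, and $\TR$ follows.
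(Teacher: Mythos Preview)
Your argument for the first assertion is correct. It differs cosmetically from the paper's route---the paper passes through the characterization of $\tac$ as ``every $\infty$-syzygy is totally reflexive'' (Proposition~\ref{9}) and observes that $\tr M$ is an $\infty$-syzygy when $\Ext_R^{>0}(M,R)=0$---but your direct splicing-and-dualizing construction accomplishes the same thing and is perfectly valid.

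The second assertion is where your proposal has a genuine gap, and it is exactly the one you flag. You aim to prove the stronger implication $\dep\Rightarrow\tac$ (for Cohen--Macaulay local rings of positive dimension), but the ``Tor-to-Ext translation'' you need is not a routine step. Knowing that every syzygy $Z_i$ of an acyclic complex is maximal Cohen--Macaulay does not by itself yield $\Ext_R^{>0}(Z_i,R)=0$ when $R$ is not Gorenstein; the Auslander--Bridger four-term sequence requires control over $\Tor_j^R(\tr Z_i,N)$, but $\tr Z_i$ lives in the dual complex $X^\ast$, whose acyclicity is precisely what you are trying to establish, so the argument is circular. Your suggested fix via the canonical module is also problematic: the Tor vanishing you have is only against modules of finite projective dimension, and $\omega$ (or its syzygies) need not have finite projective dimension in the non-Gorenstein case.

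The paper's approach avoids $\tac$ entirely and proves $\dep\Rightarrow\TR$ directly. The key intermediate result is Theorem~\ref{22}: over a local ring of positive depth, $\TR$ holds if and only if every finitely generated $M$ with $\Ext_R^{>0}(M,R)=0$ has $\depth M>0$. Granting this, one argues by contradiction (Proposition~\ref{17}): if $\TR$ fails, pick such an $M$ with $\depth M=0$. The hypothesis $\Ext_R^{>0}(M,R)=0$ now gives, via \cite[Lemma~3.4]{LM}, that $\Tor_{>0}^{\widehat R}(\widehat M,\omega)=0$ and $\widehat M\otimes_{\widehat R}\omega$ is maximal Cohen--Macaulay; a short derived-tensor computation then shows $\Tor_{>0}^R(M,\omega/\xx\omega)=0$ for a system of parameters $\xx$. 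Since $\omega/\xx\omega$ has finite length (hence is a finitely generated $R$-module), the depth formula forces $\depth(M\otimes_R\omega/\xx\omega)=0+0-d<0$, a contradiction. The point is that the \emph{Ext vanishing on $M$} is used as input to manufacture a specific Tor-vanishing pair, rather than trying to extract Ext vanishing from Tor vanishing as in your plan.
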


Since every Gorenstein ring satisfies $\TR$, the rings $R$ and $S$ given in Corollary \ref{4} show that the converse of the second assertion of Theorem \ref{29} does not hold in general.

Jorgensen and \c{S}ega \cite{JS6} construct, for a field $k$ that is not algebraic over a finite field, an artinian equicharacteristic local ring $(A,\m_A,k)$ which does not satisfy $\TR$.
(This shows that the second assertion of Theorem \ref{29} does not necessarily hold without the assumption of positive dimension.)
Applying Theorem \ref{0} to this local ring $A$ and the properties  $\P=\TR$ and $\Q=\text{Cohen--Macaulayness}$ and using Theorem \ref{29}, we obtain the corollary below.
It is claimed in \cite[Theorem 1.1]{Y} that every generically Gorenstein ring satisfies $\tac$, and it is claimed in \cite[Corollary 1.3]{Y} that every generically Gorenstein ring satisfies $\TR$.
Here, a {\em generically Gorenstein} ring is defined as a ring which is locally Gorenstein on the associated prime ideals. 
Since every domain is a generically Gorenstein ring, these claims turn out to be incorrect in any positive dimension.

\begin{cor}\label{5}
Let $k$ be a field which is not algebraic over a finite field.
\begin{enumerate}[\rm(1)]
\item
For every integer $d\ge2$, there exists a $d$-dimensional Cohen--Macaulay equicharacteristic local unique factorization domain $(R,\m_R,k)$ with an isolated singularity which does not satisfy $\TR$.
Therefore, $R$ does not satisfy $\tac$.
Moreover, $R$ is a non-Gorenstein ring that does not satisfy $\dep$.
\item
There is a $1$-dimensional Cohen--Macaulay equicharacteristic local domain $(S,\m_S,k)$ which does not satisfy $\TR$.
Hence, $S$ does not satisfy $\tac$.
Also, $S$ is a non-Gorenstein ring that does not satisfy $\dep$.
\end{enumerate}
In particular, the rings $R$ and $S$ give counterexamples to \cite[Theorem 1.1 and Corollary 1.3]{Y}.
\end{cor}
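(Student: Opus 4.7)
The plan is to apply Theorem \ref{0} to the pair $\P = \TR$ and $\Q = $ Cohen--Macaulayness, starting from the artinian example of Jorgensen and \c{S}ega \cite{JS6}, and then to read off the remaining assertions from Theorem \ref{29} together with the fact that every Gorenstein ring satisfies $\TR$.

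First I would verify that $\P = \TR$ satisfies the implications required in \eqref{6}, namely $R[\![X]\!] \in \TR \Rightarrow R \in \TR$ and $R \in \TR \Rightarrow \widehat R \in \TR$, and that Cohen--Macaulayness satisfies the reverse implications in the role of $\Q$. The latter are standard: Cohen--Macaulayness is preserved in both directions under $\m_R$-adic completion and under formal power series extension. The former are presumably recorded as preservation lemmas in the body of the paper, and this is the only genuinely nonformal input to the argument.

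Next I would invoke \cite{JS6} to obtain an artinian equicharacteristic local ring $(A, \m_A, k)$ with $A \notin \TR$ whenever $k$ is not algebraic over a finite field. Being artinian, $A$ is complete, Cohen--Macaulay, and of depth $\alpha = 0$. Applying Theorem \ref{0}(1) with $\rho = d$ for each $d \ge 2 = \max\{2,0\}$ yields a $d$-dimensional Cohen--Macaulay equicharacteristic local UFD $(R, \m_R, k)$ of depth $d$ with isolated singularity which does not satisfy $\TR$. Applying Theorem \ref{0}(2) with $\sigma = 1 = \max\{1,0\}$ yields a $1$-dimensional Cohen--Macaulay equicharacteristic local domain $(S, \m_S, k)$ with isolated singularity which does not satisfy $\TR$.

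To finish, three observations close the corollary. First, since $\tac \Rightarrow \TR$ by the first assertion of Theorem \ref{29}, both $R$ and $S$ fail $\tac$. Second, every Gorenstein ring satisfies $\TR$, so neither $R$ nor $S$ is Gorenstein. Third, since $R$ and $S$ are Cohen--Macaulay of positive dimension, the second assertion of Theorem \ref{29} gives $\dep \Rightarrow \TR$, so by contrapositive $R$ and $S$ fail $\dep$. Finally, $R$ and $S$ are domains, hence generically Gorenstein, yet they fail $\tac$ and $\TR$, contradicting \cite[Theorem 1.1]{Y} and \cite[Corollary 1.3]{Y} respectively.
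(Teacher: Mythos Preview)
Your proposal is correct and follows essentially the same approach as the paper: invoke the artinian example from \cite{JS6}, apply Theorem~\ref{0} with $\P=\TR$ and $\Q=\cm$, and deduce the remaining assertions from Theorem~\ref{29} and the fact that Gorenstein rings satisfy $\TR$. The paper cites Theorem~\ref{14} for the verification that $\TR$ and $\cm$ satisfy \eqref{6}, which is exactly the ``preservation lemmas in the body'' you allude to.
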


The paper is organized as follows.
In Section 2, we show that several properties of local rings including the above mentioned ones are preserved under fundamental operations.
In Section 3, we shall investigate the relationships between $\TR$, $\dep$, the AB property and a question of Araya.
In Section 4, we provide all of the proofs of those three results stated above, together with a couple of observations on related topics.

\begin{ac}
The authors thank Yuji Yoshino for reading this paper and giving them valuable comments.
\end{ac}

\section{Properties preserved under basic operations}

First of all, for ease of reference, we make a list of those properties which we handle in this paper.

\begin{dfn}
We consider the following conditions for a (commutative noetherian) local ring $R$.
\begin{enumerate}
\item[$\ab$]
The local ring $R$ is an {\em AB ring}, that is, $R$ is Gorenstein and there exists an integer $n$ such that whenever one has $\Ext_R^{\gg0}(M,N)=0$ for finitely generated $R$-modules $M,N$ it holds that $\Ext_R^{>n}(M,N)=0$.
\item[$\ci$]
The local ring $R$ is a complete intersection.
\item[$\cm$]
The local ring $R$ is Cohen--Macaulay.
\item[$\dep$]
All nonzero finitely generated $R$-modules $M$ and $N$ such that $\Tor_{>0}^R(M,N)=0$ satisfy the depth formula $\depth(M\otimes_RN)=\depth M+\depth N-\depth R$.
\item[$\ee$]
For all finitely generated $R$-modules $M$ and $N$ such that $\Ext_R^{\gg0}(M,N)=0$ one has $\Ext_R^{\gg0}(N,M)=0$.
\item[$\gap$]
The local ring $R$ is Gorenstein and has finite Ext-gap, where the {\em Ext-gap} of $R$ is defined to be the supremum of integers $g\ge0$ such that there exist finitely generated $R$-modules $M,N$ and an integer $n\ge0$ such that $\Ext_R^i(M,N)$ does not vanish for $i=n,n+g+1$ but does for $i=n+1,\dots,n+g$.
\item[$\gor$]
The local ring $R$ is Gorenstein.
\item[$\tac$]
Every acyclic complex of finitely generated projective $R$-modules is totally acyclic.
\item[$\te$]
For any two finitely generated $R$-modules $M,N$ with $\Tor_{\gg0}^R(M,N)=0$ one has $\Ext_R^{\gg0}(M,N)=0$.
\item[$\TR$]
A finitely generated $R$-module $M$ is totally reflexive whenever $\Ext_R^{>0}(M,R)=0$.
\end{enumerate}
\end{dfn}

Throughout this section, let $R$ be a local ring with maximal ideal $\m$ and residue field $k$.
We first deal with the property $\ee$.
We establish two lemmas to prove a proposition on $\ee$.

\begin{lem}\label{2}
Let $M$ be an $\widehat R$-module of finite length.
Then $M$ has finite length as an $R$-module as well.
The maps $f:M\rightleftarrows M\otimes_R\widehat R:g$, given by $x\mapsto x\otimes1$ and $ax\mapsfrom x\otimes a$, are mutually inverse $\widehat R$-isomorphisms.
\end{lem}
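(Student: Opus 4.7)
The plan is to reduce everything to the Artinian quotient $R/\m^n$ for a suitable $n$, using that $\m_{\widehat R}=\m\widehat R$ and the standard isomorphism $R/\m^n\xrightarrow{\sim}\widehat R/\m^n\widehat R$. Since $M$ has finite length as an $\widehat R$-module, I would pick $n$ with $\m_{\widehat R}^n M=0$; the equality $\m_{\widehat R}^n=\m^n\widehat R$ then gives $\m^n M=0$, so the $\widehat R$-action on $M$ factors through $\widehat R/\m^n\widehat R$, and via the canonical isomorphism with $R/\m^n$, $M$ carries a natural $R$-module structure.

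For the finite length claim, I would observe that $M$ is finitely generated as an $\widehat R$-module, and since its action factors through $\widehat R/\m^n\widehat R\cong R/\m^n$, it is finitely generated over the Artinian ring $R/\m^n$; hence $M$ has finite length over $R$ (any composition series over $R/\m^n$ remains one over $R$).

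For the isomorphism, the key computation is
\[
M\otimes_R\widehat R \;=\; M\otimes_{R/\m^n}\bigl(R/\m^n\otimes_R\widehat R\bigr) \;=\; M\otimes_{R/\m^n}\bigl(\widehat R/\m^n\widehat R\bigr) \;\cong\; M\otimes_{R/\m^n}(R/\m^n) \;=\; M,
\]
and unwinding the identifications shows that the composite isomorphism is precisely $f\colon x\mapsto x\otimes 1$. For $g$, I would check that $(x,a)\mapsto ax$ is $R$-bilinear: it is $R$-linear in $x$, and the fact that $\m^n M=0$ makes $ax$ depend only on the image of $a$ in $\widehat R/\m^n\widehat R\cong R/\m^n$, so the map is well-defined on $M\otimes_R\widehat R$. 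Then $g\circ f=\id_M$ is immediate, while $f\circ g=\id$ is verified on a simple tensor $x\otimes a$ by writing $a=r+b$ with $r\in R$, $b\in\m^n\widehat R$, so that $x\otimes a=rx\otimes 1=ax\otimes 1$.

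There is no substantial obstacle; the argument is essentially bookkeeping. The one point demanding attention is the compatibility of identifications at each step, in particular the reliance on $\m_{\widehat R}=\m\widehat R$ and $R/\m^n\cong\widehat R/\m^n\widehat R$, which must be used to convert the hypothesis on $M$ as an $\widehat R$-module into the $R$-linear statements we want.
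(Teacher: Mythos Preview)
Your argument is correct. Both your proof and the paper's rely on the basic fact that a finite-length $\widehat R$-module is annihilated by some $\m^n\widehat R=\m_{\widehat R}^n$, but they package the consequences differently.

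The paper argues directly via a composition series: each factor $M_i/M_{i-1}\cong\widehat R/\m_{\widehat R}\cong k$ is simple over $R$ as well, so the same filtration is a composition series over $R$; then, since $M$ is finitely generated and $\m$-adically complete as an $R$-module, the general fact $M\cong\widehat M\cong M\otimes_R\widehat R$ gives that $f$ is an isomorphism, and $gf=\id_M$ forces $g=f^{-1}$. Your approach instead factors everything through the Artinian quotient $R/\m^n\cong\widehat R/\m^n\widehat R$ and computes $M\otimes_R\widehat R\cong M\otimes_{R/\m^n}(R/\m^n)\cong M$ by explicit base change, then verifies $f$ and $g$ match the two directions of this chain. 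The paper's route is a bit shorter since it quotes the completion isomorphism as a black box, while yours is more self-contained and makes the identity $x\otimes a=ax\otimes1$ in $M\otimes_R\widehat R$ completely transparent (which is exactly what is needed when this lemma is applied later). Either way, the bookkeeping with $\m_{\widehat R}=\m\widehat R$ and $R/\m^n\cong\widehat R/\m^n\widehat R$ that you flag is the only real content.
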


\begin{proof}
By assumption, there is a composition series $0=M_0\subsetneq\cdots\subsetneq M_n=M$ of the $\widehat R$-module $M$, where $n=\ell_{\widehat R}(M)$.
Then for each $i$ there is an $\widehat R$-isomorphism $M_i/M_{i-1}\cong\widehat R/\widehat\m\cong k$.
Hence this is also a composition series of the $R$-module $M$, and we have $\ell_R(M)=\ell_{\widehat R}(M)=n$.
Thus, $M$ has finite length as an $R$-module.
In particular, $M$ is finitely generated and complete as an $R$-module, so that the map $f$ is an $R$-isomorphism.
It is easy to see that $g$ is an $\widehat R$-homomorphism and $gf=\id_M$.
Therefore, $f$ and $g$ are mutually inverse $\widehat R$-isomorphisms.
\end{proof}

Let $M$ be a finitely generated $R$-module.
Take a minimal free resolution $\cdots\xrightarrow{\partial_3}F_2\xrightarrow{\partial_2}F_1\xrightarrow{\partial_1}F_0\to M\to0$ of $M$.
The image of the $i$th differential map $\partial_i$ is called the {\em $i$th syzygy} of $M$ and denoted by $\syz^iM$ (or $\syz_R^iM$ to specify the base ring $R$).
We set $\syz^0M=M$ and $\syz M=\syz^1M$.
Note that $\syz^iM$ is uniquely determined by $M$ and $i$ up to isomorphism.

\begin{lem}\label{1}
Let $M$ and $N$ be finitely generated modules over the local ring $R$.
\begin{enumerate}[\rm(1)]
\item
Assume $R$ is Gorenstein.
Let $m,n\ge0$.
Then $\Ext_R^{\gg0}(M,N)=0$ if and only if $\Ext_R^{\gg0}(\syz^mM,\syz^nN)=0$.
\item
Let $\xx=x_1,\dots,x_n$ be a regular sequence on $M$ (resp. $N$).
One then has that $\Ext_R^{\gg0}(M,N)=0$ if and only if $\Ext_R^{\gg0}(M/\xx M,N)=0$ (resp. $\Ext_R^{\gg0}(M,N/\xx N)=0$).
\end{enumerate}
\end{lem}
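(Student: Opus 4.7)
The plan for part (1) is to handle the two variables in turn. For the first variable, no hypothesis on $R$ is needed. The short exact sequences $0\to\syz^{i+1}M\to F_i\to\syz^iM\to 0$ coming from a minimal free resolution of $M$ yield, upon applying $\Hom_R(-,N)$, dimension-shifting isomorphisms $\Ext_R^{j+1}(\syz^iM,N)\cong\Ext_R^j(\syz^{i+1}M,N)$ for all $j\ge1$, since the $F_i$ are projective. Iterating $m$ times gives $\Ext_R^{j+m}(M,N)\cong\Ext_R^j(\syz^mM,N)$ for $j\ge1$, so $\Ext_R^{\gg0}(M,N)=0$ if and only if $\Ext_R^{\gg0}(\syz^mM,N)=0$. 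For the second variable, the Gorenstein hypothesis enters through the finiteness of the self-injective dimension of $R$: one has $\Ext_R^j(M,R)=0$ for $j>\dim R$, and hence $\Ext_R^j(M,F_i)=0$ for $j\gg0$ because $F_i$ is a finite direct sum of copies of $R$. Applying $\Hom_R(M,-)$ to $0\to\syz^{i+1}N\to F_i\to\syz^iN\to 0$ then yields $\Ext_R^{j-1}(M,\syz^iN)\cong\Ext_R^j(M,\syz^{i+1}N)$ for $j\gg0$, and iterating $n$ times completes the argument. Combining both reductions gives the full claim.

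For part (2), I would induct on $n$, first treating the case where $\xx$ is $M$-regular. When $n=1$ with $x=x_1$, applying $\Hom_R(-,N)$ to $0\to M\xrightarrow{x}M\to M/xM\to 0$ produces a long exact sequence containing the segment
$$\Ext_R^j(M/xM,N)\to\Ext_R^j(M,N)\xrightarrow{x}\Ext_R^j(M,N)\to\Ext_R^{j+1}(M/xM,N).$$
The implication $\Ext_R^{\gg0}(M,N)=0\Rightarrow\Ext_R^{\gg0}(M/xM,N)=0$ is then immediate. For the converse, if $\Ext_R^j(M/xM,N)$ vanishes in two consecutive degrees $j,j+1$, then multiplication by $x$ is an isomorphism on $\Ext_R^j(M,N)$; since $x\in\m$ and $\Ext_R^j(M,N)$ is a finitely generated $R$-module, Nakayama's lemma forces it to vanish. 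The inductive step writes $\xx=x_1,\dots,x_{n-1},x_n$, applies the induction hypothesis to the $M$-regular sequence $x_1,\dots,x_{n-1}$, and then invokes the $n=1$ case to the module $M/(x_1,\dots,x_{n-1})M$ with the regular element $x_n$. The case where $\xx$ is $N$-regular is symmetric, using $0\to N\xrightarrow{x}N\to N/xN\to 0$ and $\Hom_R(M,-)$ instead.

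The principal difficulty lies in the second-variable assertion of part (1): one must argue that higher Ext against the free modules $F_i$ eventually vanishes, and this is precisely where the Gorenstein hypothesis is needed, in the form of the finite self-injective dimension of $R$. The remaining manipulations are standard long-exact-sequence chases combined with Nakayama's lemma.
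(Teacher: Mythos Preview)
Your proof is correct and follows essentially the same route as the paper's: for part~(1) you dimension-shift in the first variable via $\Ext_R^j(\syz^mM,N)\cong\Ext_R^{j+m}(M,N)$, then handle the second variable by breaking a free resolution of $N$ into short exact sequences and using $\Ext_R^{>d}(-,R)=0$ from the Gorenstein hypothesis; for part~(2) you induct on the length of the sequence and, in the base case, combine the long exact sequence from $0\to M\xrightarrow{x}M\to M/xM\to0$ with Nakayama's lemma, exactly as the paper does.
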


\begin{proof}
(1) Fix an integer $i>0$.
We have $\Ext_R^i(\syz^mM,N)\cong\Ext_R^{i+m}(M,N)$.
There is an exact sequence $0\to\syz^nN\to F_{n-1}\to\cdots\to F_0\to N\to0$ of finitely generated $R$-modules with $F_0,\dots,F_{n-1}$ free.
Decomposing this into short exact sequences and using the fact from the Gorensteinness of $R$ that $\Ext_R^{>d}(\syz^mM,R)=0$ with $d=\dim R$, we observe that $\Ext_R^{>t}(\syz^mM,N)=0$ if and only if $\Ext_R^{>t+n}(\syz^mM,\syz^nN)=0$ for each $t\ge d$.
Therefore, $\Ext_R^{\gg0}(M,N)=0$ if and only if $\Ext_R^{\gg0}(\syz^mM,N)=0$, if and only if $\Ext_R^{\gg0}(\syz^mM,\syz^nN)=0$.

(2) We only show $\Ext_R^{\gg0}(M,N)=0$ if and only if $\Ext_R^{\gg0}(M/\xx M,N)=0$ for an $M$-regular sequence $\xx=x_1,\dots,x_n$; the other assertion is shown similarly.
By induction on $n$, it suffices to prove that $\Ext_R^{\gg0}(M,N)=0$ if and only if $\Ext_R^{\gg0}(M/xM,N)=0$ for an $M$-regular element $x\in\m$.
This is actually a consequence of the application of $\Ext_R^{\gg0}(-,N)$ to the exact sequence $0\to M\xrightarrow{x}M\to M/xM\to0$, plus Nakayama's lemma.
\end{proof}

The proposition below collects statements on $\ee$; it is preserved under several standard operations.

\begin{prop}\label{7}
\begin{enumerate}[\rm(1)]
\item
If $R$ satisfies the property $\ee$, then $R$ is a Gorenstein ring.
\item
Let $R\to S$ be a flat local homomorphism of local rings.
If $S$ satisfies $\ee$, so does $R$.
\item
Let $x\in\m$ be an $R$-regular element.
If $R/(x)$ satisfies $\ee$, then so does $R$.
\item
If the local ring $R$ satisfies the property $\ee$, then so does the $\m$-adic completion $\widehat R$ of $R$.
\end{enumerate} 
\end{prop}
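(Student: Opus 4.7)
The plan is to establish the four parts in order, using (1) to get Gorensteinness where it is needed later and applying Lemma \ref{1} throughout to replace modules by syzygies or by regular quotients.

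For (1), take $M=R$; since $\Ext_R^{>0}(R,-)=0$, the property $\ee$ forces $\Ext_R^{\gg0}(N,R)=0$ for every finitely generated $R$-module $N$. Specializing to $N=k$ gives $\id_R R<\infty$, i.e., $R$ is Gorenstein. For (2) I would invoke the standard flat base change isomorphism $\Ext_R^i(M,N)\otimes_R S\cong\Ext_S^i(M\otimes_R S,N\otimes_R S)$, valid since $M$ is finitely generated (hence finitely presented as $R$ is noetherian) and $S$ is $R$-flat. The hypothesis transfers to $S$; then $\ee$ for $S$ flips the arguments; and because a flat local homomorphism of local rings is automatically faithfully flat, the swapped vanishing descends to $\Ext_R^{\gg0}(N,M)=0$.

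For (3), first apply (1) to $\bar R:=R/(x)$ together with ascent of Gorensteinness along the regular element $x$ to conclude that $R$ itself is Gorenstein. Given finitely generated $M,N$ with $\Ext_R^{\gg0}(M,N)=0$, take high syzygies $M'=\syz^d M$ and $N'=\syz^d N$ with $d=\dim R$, both maximal Cohen--Macaulay over the Gorenstein $R$. Since MCM modules over a Cohen--Macaulay local ring satisfy $\ass\subseteq\ass R$, the $R$-regular element $x$ remains regular on $M'$ and on $N'$. Lemma \ref{1}(1) and then (2) reduce the hypothesis to $\Ext_R^{\gg0}(M'/xM',N')=0$. Now invoke Rees' change-of-rings isomorphism $\Ext_R^{i+1}(L,K)\cong\Ext_{\bar R}^{i}(L,K/xK)$ (valid whenever $xL=0$ and $x$ is $K$-regular), applied to $(L,K)=(M'/xM',N')$, to obtain $\Ext_{\bar R}^{\gg0}(M'/xM',N'/xN')=0$. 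Apply $\ee$ for $\bar R$ to swap the two arguments, then retrace every reduction -- Rees applied to $(N'/xN',M')$, then Lemma \ref{1}(2) using $x$-regularity on $N'$, then Lemma \ref{1}(1) -- to conclude $\Ext_R^{\gg0}(N,M)=0$.

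For (4), (1) together with ascent of Gorensteinness to the completion shows that $\widehat R$ is Gorenstein of dimension $d=\dim R$. Given finitely generated $\widehat R$-modules $\widehat M,\widehat N$ with $\Ext_{\widehat R}^{\gg0}(\widehat M,\widehat N)=0$, pass via Lemma \ref{1}(1) to MCM syzygies $\widehat M',\widehat N'$, and choose a maximal $R$-regular sequence $\xx=x_1,\dots,x_d\in\m$, which exists since $R$ is Cohen--Macaulay. As $\xx$ is a system of parameters of $\widehat R$, it is regular on every MCM $\widehat R$-module, so Lemma \ref{1}(2) applied $d$ times yields $\Ext_{\widehat R}^{\gg0}(\widehat M'/\xx\widehat M',\widehat N'/\xx\widehat N')=0$. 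Both quotients have finite length over $\widehat R$, so by Lemma \ref{2} they agree with finite length $R$-modules $M_0,N_0$ with $M_0\otimes_R\widehat R\cong M_0$ naturally. Flat base change for Ext together with faithful flatness of $R\to\widehat R$ converts the vanishing to $\Ext_R^{\gg0}(M_0,N_0)=0$. Applying $\ee$ for $R$ swaps the arguments, and reversing all the prior reductions (faithful flatness, Lemma \ref{1}(2), Lemma \ref{1}(1)) produces the required $\Ext_{\widehat R}^{\gg0}(\widehat N,\widehat M)=0$.

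The main obstacle lies in part (3): correctly threading the chain of syzygy and regular-element reductions so that, after crossing Rees' isomorphism to $\bar R$-cohomology and applying $\ee$ for $\bar R$, one can retrace every step. The crucial structural input is the observation that MCM modules over a Cohen--Macaulay local ring contribute no new associated primes, which is what permits $x$ to remain regular on the chosen syzygies.
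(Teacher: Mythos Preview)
Your proof is correct and follows essentially the same approach as the paper's. The only notable difference is in part (3): the paper passes to first syzygies $\syz M,\syz N$ rather than $d$th syzygies, simply observing that an $R$-regular element is automatically regular on any syzygy (since syzygies embed in free modules), which sidesteps the MCM/associated-primes argument you invoke.
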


\begin{proof}
(1) As $\Ext_R^{>0}(R,k)=0$, the condition $\ee$ implies $\Ext_R^{\gg0}(k,R)=0$, which means that $R$ is Gorenstein.

(2) Let $M,N$ be finitely generated $R$-modules such that $\Ext_R^i(M,N)=0$ for $i\gg0$.
Then $\Ext_S^i(M\otimes_RS,N\otimes_RS)\cong\Ext_R^i(M,N)\otimes_RS=0$ for $i\gg0$.
As $S$ satisfies $\ee$ and $M\otimes_RS,N\otimes_RS$ are finitely generated $S$-modules, we have $\Ext_S^i(N\otimes_RS,M\otimes_RS)=0$ for $i\gg0$.
Hence $\Ext_R^i(N,M)\otimes_RS=0$ for $i\gg0$.
Since $S$ is faithfully flat over $R$, we get $\Ext_R^i(N,M)=0$ for $i\gg0$.

(3) As $R/(x)$ satisfies $\ee$, it is Gorenstein by (1), and so is $R$.
Let $M,N$ be finitely generated $R$-modules with $\Ext_R^{\gg0}(M,N)=0$.
Lemma \ref{1}(1) implies $\Ext_R^{\gg0}(\syz M,\syz N)=0$. Note that $x$ is regular on $\syz M$ and $\syz N$.
Lemma \ref{1}(2) shows $\Ext_R^{\gg0}(\syz M/x\syz M,\syz N)=0$.
As $\Ext_{R/(x)}^i(\syz M/x\syz M,\syz N/x\syz N)$ is isomorphic to $\Ext_R^{i+1}(\syz M/x\syz M,\syz N)$ by \cite[\S18, Lemma 2]{M}, we have $\Ext_{R/(x)}^{\gg0}(\syz M/x\syz M,\syz N/x\syz N)=0$.
Since $R/(x)$ satisfies $\ee$, it follows that $\Ext_{R/(x)}^{\gg0}(\syz N/x\syz N,\syz M/x\syz M)=0$.
Hence $\Ext_R^{\gg0}(\syz N/x\syz N,\syz M)=0$. We get $\Ext_R^{\gg0}(\syz N,\syz M)=0$ by Lemma \ref{1}(2), and $\Ext_R^{\gg0}(N,M)=0$ by Lemma \ref{1}(1).

(4) Since $R$ satisfies $\ee$, it is Gorenstein by (1), and so is $\widehat R$.
Let $M,N$ be finitely generated $\widehat R$-modules with $\Ext_{\widehat R}^{\gg0}(M,N)=0$.
Choose integers $m,n\ge0$ such that $Z=\syz_{\widehat R}^mM$ and $Y=\syz_{\widehat R}^nN$ are maximal Cohen--Macaulay $\widehat R$-modules.
Lemma \ref{1}(1) shows $\Ext_{\widehat R}^{\gg0}(Z,Y)=0$.
Let $\xx=x_1,\dots,x_d$ be a system of parameters of $R$.
Then $\xx$ is a regular sequence on $R$ and $\widehat R$.
By Lemma \ref{1}(2) we have $\Ext_{\widehat R}^{\gg0}(V,W)=0$, where $V=Z/\xx Z$ and $W=Y/\xx Y$.
Since $V,W$ have finite length as $\widehat R$-modules, Lemma \ref{2} implies
\begin{equation}\label{3}
\Ext_R^i(V,W)\otimes_R\widehat R
\cong\Ext_{\widehat R}^i(V\otimes_R\widehat R,W\otimes_R\widehat R)
\cong\Ext_{\widehat R}^i(V,W)=0\text{ for }i\gg0.
\end{equation}
The faithful flatness of $\widehat R$ over $R$ implies $\Ext_R^{\gg0}(V,W)=0$.
As $V,W$ are finitely generated $R$-modules and $R$ satisfies $\ee$, we get $\Ext_R^{\gg0}(W,V)=0$.
Similarly as in \eqref{3} we have $\Ext_{\widehat R}^i(W,V)\cong\Ext_R^i(W,V)\otimes_R\widehat R=0$ for $i\gg0$.
We obtain $\Ext_{\widehat R}^{\gg0}(Y,Z)=0$ by Lemma \ref{1}(2), and $\Ext_{\widehat R}^{\gg0}(N,M)=0$ by Lemma \ref{1}(1).
\end{proof}

A similar argument to the above proof shows that the same statement as Proposition \ref{7} holds for the property $\te$.

\begin{prop}\label{7'}
\begin{enumerate}[\rm(1)]
\item
If $R$ satisfies the property $\te$, then $R$ is a Gorenstein ring.
\item
Let $R\to S$ be a flat local homomorphism of local rings.
If $S$ satisfies $\te$, so does $R$.
\item
Let $x\in\m$ be an $R$-regular element.
If $R/(x)$ satisfies $\te$, then so does $R$.
\item
If the local ring $R$ satisfies the property $\te$, then so does the $\m$-adic completion $\widehat R$ of $R$.
\end{enumerate} 
\end{prop}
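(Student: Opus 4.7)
The plan is to mirror the proof of Proposition \ref{7} part by part, interchanging Tor with Ext on the hypothesis side and leveraging the fact that Tor satisfies the same elementary reduction principles as Ext. Concretely, I would first record the Tor-analogue of Lemma \ref{1}: since $\Tor_i^R(\syz^m M,\syz^n N)\cong\Tor_{i+m+n}^R(M,N)$ for $i\ge 1$, one has $\Tor_{\gg 0}^R(M,N)=0$ if and only if $\Tor_{\gg 0}^R(\syz^m M,\syz^n N)=0$ with no Gorensteinness required; and from the long exact sequence attached to $0\to M\xrightarrow{x}M\to M/xM\to 0$ together with Nakayama's lemma, one gets the analogue of Lemma \ref{1}(2) for Tor whenever $x$ is $M$-regular or $N$-regular.

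For part (1), I would apply $\te$ with $M=k$ and $N=R$: since $\Tor_{>0}^R(k,R)=0$ holds trivially, $\te$ delivers $\Ext_R^{\gg 0}(k,R)=0$, which by Bass's theorem is equivalent to Gorensteinness of $R$. Part (2) is a direct flat base change: push $\Tor_{\gg 0}^R(M,N)=0$ up to $S$ via the isomorphism $\Tor_i^S(M\otimes_R S,N\otimes_R S)\cong\Tor_i^R(M,N)\otimes_R S$, apply $\te$ on $S$ to obtain Ext-vanishing over $S$, and then pull back to $R$ using faithful flatness. For part (3), starting from $\Tor_{\gg 0}^R(M,N)=0$, shift to syzygies, reduce modulo an $\syz M$- and $\syz N$-regular element $x\in\m$, convert $R$-Tor into $R/(x)$-Tor via the standard change-of-rings isomorphism (valid because $x$ is regular on the second argument), apply $\te$ on $R/(x)$ to get Ext-vanishing there, and then climb back using the Rees-type formula invoked in the proof of Proposition \ref{7}(3) together with Lemma \ref{1}(1) and (2); the final appeal to Lemma \ref{1}(1) needs $R$ Gorenstein, which holds by part (1) applied to $R/(x)$ plus ascent of Gorensteinness along a regular element. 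Part (4) is a direct transcription of Proposition \ref{7}(4): replace $M,N$ by sufficiently high $\widehat R$-syzygies $Z,Y$ that are maximal Cohen--Macaulay, kill by a system of parameters $\xx$ of $R$ to land in finite length $\widehat R$-modules $V,W$, transport Tor-vanishing between $\widehat R$ and $R$ using Lemma \ref{2} and flat base change, apply $\te$ on $R$, and then reverse the Ext side using base change and Lemma \ref{1}.

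The main obstacle I foresee is essentially bookkeeping: the symmetric role of the two arguments in Tor, versus the asymmetric role in Ext, makes it slightly delicate to verify that the element $x$ (or sequence $\xx$) we reduce by is regular on precisely the modules for which the change-of-rings isomorphism, the Rees formula, and Lemma \ref{1}(2) each require it. Once this is set up properly, every step is a direct Tor-counterpart of the corresponding Ext-step in Proposition \ref{7}, and the proof goes through essentially verbatim.
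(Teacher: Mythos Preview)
Your proposal is correct and matches the paper's approach: the paper simply states that the proof of Proposition~\ref{7} carries over verbatim with the obvious Tor-for-Ext substitutions on the hypothesis side, which is exactly what you outline. The only extra care you take---noting that the Tor-version of Lemma~\ref{1}(1) needs no Gorenstein hypothesis and checking the regularity conditions for the change-of-rings isomorphisms---is precisely the bookkeeping the paper leaves implicit.
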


Next we deal with the property $\TR$.
For a finitely generated module $M$ over a ring $R$ we denote by $\gdim_RM$ the G-dimension of $M$.
Note that $M$ is totally reflexive if and only if $\gdim_RM\le0$.
For the details of G-dimension, we refer the reader to \cite{AB,C}.
The property $\TR$ is retained under taking a local flat extension and modding out by a regular element.

\begin{prop}\label{8}
\begin{enumerate}[\rm(1)]
\item
Let $R\to S$ be a flat local homomorphism of local rings.
If $S$ satisfies $\TR$, so does $R$.
\item
Let $x\in\m$ be an $R$-regular element.
Then $R$ satisfies $\TR$ if and only if so does $R/(x)$.
\end{enumerate}
\end{prop}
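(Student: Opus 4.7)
The plan for (1) is flat descent. If $M$ is a finitely generated $R$-module with $\Ext_R^{>0}(M,R)=0$, then flatness gives $\Ext_S^{>0}(M\otimes_R S,S)\cong\Ext_R^{>0}(M,R)\otimes_R S=0$, so by hypothesis $M\otimes_R S$ is totally reflexive over $S$, i.e. $\gdim_S(M\otimes_R S)=0$. A flat local homomorphism is faithfully flat, and the standard descent of Gorenstein dimension along faithfully flat local maps (Avramov--Foxby) yields $\gdim_R M=\gdim_S(M\otimes_R S)=0$, so $M$ is totally reflexive over $R$.

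For the ``if'' direction of (2), I would pass to the syzygy $N=\syz_R M\subset R^n$, on which $x$ is automatically regular and which satisfies $\Ext_R^{>0}(N,R)=0$ by dimension shift. The long exact sequence from $0\to N\xrightarrow{x}N\to N/xN\to 0$ combined with Matsumura's change-of-rings lemma (the one used already in the proof of Proposition \ref{7}) gives $\Ext_{R/(x)}^{>0}(N/xN,R/(x))=0$, so the hypothesis on $R/(x)$ forces $N/xN$ totally reflexive over $R/(x)$. A standard lifting result for totally reflexive modules modulo a regular element then yields $N$ totally reflexive over $R$. Finally, dualizing $0\to N\to R^n\to M\to 0$ twice and using that both $N$ and $N^\ast$ are totally reflexive (hence $\Ext_R^{>0}(-,R)$-acyclic) gives $M\cong M^{\ast\ast}$ and $\Ext_R^{>0}(M^\ast,R)=0$, so $M$ is totally reflexive.

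For the ``only if'' direction of (2), let $L=\syz_R\bar M$. Matsumura's lemma yields $\Ext_R^{i+1}(\bar M,R)\cong\Ext_{R/(x)}^i(\bar M,R/(x))=0$ for $i\ge 1$, so $\Ext_R^{>0}(L,R)=0$; the hypothesis on $R$ makes $L$ totally reflexive, and hence so is $L/xL$ over $R/(x)$. Tensoring $0\to L\to R^n\to\bar M\to 0$ with $R/(x)$ and using $\Tor_1^R(\bar M,R/(x))=\bar M$ (since $x\bar M=0$) produces a short exact sequence $0\to\bar M\to L/xL\to\syz_{R/(x)}\bar M\to 0$. Iterating this construction on $\syz_{R/(x)}^k\bar M$, each of which inherits the Ext vanishing, and splicing yields an infinite exact sequence $0\to\bar M\to T_0\to T_1\to T_2\to\cdots$ with each $T_i$ totally reflexive over $R/(x)$. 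Dualizing (the Ext vanishing of each $\syz_{R/(x)}^k\bar M$ makes each short dualization exact) gives an exact sequence $\cdots\to T_1^\ast\to T_0^\ast\to\bar M^\ast\to 0$ by the totally reflexive, hence $\Hom_{R/(x)}(-,R/(x))$-acyclic, modules $T_i^\ast$. Applying $\Hom_{R/(x)}(-,R/(x))$ to this resolution and using $T_i^{\ast\ast}=T_i$ identifies $\Ext_{R/(x)}^\ast(\bar M^\ast,R/(x))$ with $H^\ast(T_\bullet)$, which is $\bar M$ in degree zero and vanishes in higher degrees. Therefore $\bar M\cong\bar M^{\ast\ast}$ and $\Ext_{R/(x)}^{>0}(\bar M^\ast,R/(x))=0$, so $\bar M$ is totally reflexive.

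The hard part is the ``only if'' direction: building the infinite right resolution of $\bar M$ by totally reflexive $R/(x)$-modules through iteration, verifying its exactness at each step, and then running the acyclic-resolution computation on the dual resolution to recover both the biduality isomorphism for $\bar M$ and the Ext vanishing for $\bar M^\ast$. The first two parts of the argument only produce tot-refl information about the syzygy $L$ and its reduction $L/xL$, which is insufficient on its own; the iteration and the dual-side cohomology computation are what bridge the gap.
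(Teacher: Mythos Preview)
Your proof of (1) and of the ``if'' direction of (2) matches the paper's argument essentially step for step: pass to the syzygy $N=\syz_RM$, reduce modulo $x$, invoke the hypothesis on $R/(x)$, and lift total reflexivity back using the standard change-of-rings result (the paper cites \cite[(1.4.5)]{C} and then uses $\gdim_RM\le 1$ together with $\Ext_R^{>0}(M,R)=0$ to conclude; your explicit double-dualization of $0\to N\to R^n\to M\to 0$ accomplishes the same thing).

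For the ``only if'' direction you take a genuinely different and considerably longer route. The paper argues in one line after obtaining that $L=\syz_R\bar M$ is totally reflexive: since $\gdim_R\bar M\le 1$, the change-of-rings formula $\gdim_{R/(x)}\bar M=\gdim_R\bar M-1$ from \cite[(1.5.3)]{C} (valid for any $R/(x)$-module when $x$ is $R$-regular) gives $\gdim_{R/(x)}\bar M\le 0$ immediately. Your argument instead builds by hand a right resolution $0\to\bar M\to T_0\to T_1\to\cdots$ of $\bar M$ by totally reflexive $R/(x)$-modules (iterating the short exact sequence $0\to\bar M_k\to L_k/xL_k\to\bar M_{k+1}\to 0$), dualizes, and uses that the $T_i^\ast$ form an $\Hom_{R/(x)}(-,R/(x))$-acyclic resolution of $\bar M^\ast$ to read off both $\Ext_{R/(x)}^{>0}(\bar M^\ast,R/(x))=0$ and the biduality isomorphism. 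This is correct (your acyclic-resolution step is the standard fact that $F$-acyclic resolutions compute derived functors of $F$, and the biduality identification follows from naturality against the embedding $\bar M\hookrightarrow T_0\cong T_0^{\ast\ast}$), but it is effectively a hands-on reproof of the $\gdim$ change-of-rings formula in the case you need. The paper's citation buys brevity; your construction buys self-containedness and makes visible the complete-resolution machinery underlying the formula.
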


\begin{proof}
(1) Let $M$ be a finitely generated $R$-module such that $\Ext_R^{>0}(M,R)=0$.
Then $M\otimes_RS$ is a finitely generated $S$-module with $\Ext_S^{>0}(M\otimes_RS,S)=0$, as $S$ is flat over $R$.
Since $S$ satisfies $\TR$, the $S$-module $M\otimes_RS$ is totally reflexive.
As $S$ is faithfully flat over $R$, the $R$-module $M$ is totally reflexive by \cite[(4.1.4)]{AF}.

(2) The `only if' part:
Let $M$ be a finitely generated $R/(x)$-module such that $\Ext_{R/(x)}^{>0}(M,R/(x))=0$.
Then $\Ext_R^{>1}(M,R)=0$ by \cite[\S18, Lemma 2]{M}, and hence $\Ext_R^{>0}(\syz_RM,R)=0$.
Since the ring $R$ satisfies $\TR$, the $R$-module $\syz_RM$ is totally reflexive, so that $\gdim_RM\le1$ by \cite[(1.2.9)]{C}.
We get $\gdim_{R/(x)}M=\gdim_RM-1\le0$ by \cite[(1.5.3)]{C}, and therefore $M$ is totally reflexive over $R/(x)$.

The `if' part:
Let $M$ be a finitely generated $R$-module with $\Ext_R^{>0}(M,R)=0$.
Then $\Ext_R^{>0}(\syz_RM,R)=0$.
As $x$ is $\syz_RM$-regular, there is an exact sequence $0\to\syz_RM\xrightarrow{x}\syz_RM\to\syz_RM/x\syz_RM\to0$, which shows that $\Ext_R^{>1}(\syz_RM/x\syz_RM,R)=0$.
Using \cite[\S18, Lemma 2]{M} again, we get $\Ext_{R/(x)}^{>0}(\syz_RM/x\syz_RM,R/(x))=0$.
Since $R/(x)$ satisfies $\TR$, the $R/(x)$-module $\syz_RM/x\syz_RM$ is totally reflexive.
By \cite[(1.2.9)\&(1.4.5)]{C} we get
$$
\gdim_RM-1\le\gdim_R(\syz_RM)=\gdim_{R/(x)}(\syz_RM/x\syz_RM)\le0,
$$
and hence $\gdim_RM\le1$.
It follows by \cite[(1.2.7)]{C} or \cite[(4.1.3)]{AF} that $M$ is totally reflexive as an $R$-module.
\end{proof}

Now we consider the property $\tac$.
We first remark that it can be described in terms of frequently used notations of homotopy categories.

\begin{rem}
Denote by $\Proj R$ the category of projective $R$-modules, and by $\proj R$ the category of finitely generated projective $R$-modules.
For $\C\in\{\Proj R,\proj R\}$, denote by $\k(\C)$ the homotopy category of complexes over $\C$, by $\ka(\C)$ the full subcategory of $\k(\C)$ consisting of acyclic complexes, and by $\kt(\C)$ the full subcategory of $\k(\C)$ consisting of totally acyclic complexes. 
Then by definition one has:
\begin{equation}\label{11}
\text{$R$ satisfies $\tac$}\iff\ka(\proj R)=\kt(\proj R).
\end{equation}
This equivalence should be compared with the equivalence below, which holds when $R$ admits a dualizing complex and is due to Iyengar and Krause \cite[Corollary 5.5]{IK}.
\begin{equation}\label{10}
\text{$R$ is Gorenstein}\iff\ka(\Proj R)=\kt(\Proj R).
\end{equation}
\end{rem}

We say that an $R$-module $M$ is an {\em $\infty$-syzygy} if there exists an exact sequence $0\to M\to F^0\to F^1\to F^2\to\cdots$ of finitely generated free $R$-modules.
We set $(-)^\ast=\Hom_R(-,R)$ and denote by $\tr(-)$ the (Auslander) transpose.
The following proposition tells us the relationship of $\tac$ with other properties including $\TR$.

\begin{prop}\label{9}
The following implications hold true.
$$
\begin{array}{l}
\text{$R$ is Gorenstein}\ \Rightarrow\ 
\text{$R$ satisfies $\tac$}\ \Leftrightarrow\ 
\text{any $\infty$-syzygy is totally reflexive}\\
\phantom{\text{$R$ is Gorenstein}\ \Rightarrow\ 
\text{$R$ satisfies $\tac$}}\ \Leftrightarrow\ 
\text{any $\infty$-syzygy $M$ satisfies $\Ext_R^1(M,R)=0$}\ \Rightarrow\ 
\text{$R$ satisfies $\TR$.}
\end{array}
$$
\end{prop}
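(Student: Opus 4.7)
The plan is to establish the two central biconditionals first and then deduce the outer implications. Write $(-)^* = \Hom_R(-,R)$.

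For the biconditionals, given an $\infty$-syzygy $M$ with exact sequence $0\to M\to F^0\to F^1\to\cdots$, I would splice it with a projective resolution of $M$ to produce an acyclic complex $G^\bullet=(\cdots\to P_1\to P_0\to F^0\to F^1\to\cdots)$ in $\ka(\proj R)$; every kernel $Z^i$ of $G^\bullet$ is itself an $\infty$-syzygy, and conversely every kernel appearing in an acyclic complex in $\ka(\proj R)$ is an $\infty$-syzygy. Breaking $G^\bullet$ into short exact sequences $0\to Z^i\to G^i\to Z^{i+1}\to 0$, the dual $(G^\bullet)^*$ is acyclic iff each of these dualizes to an exact sequence, iff $\Ext_R^1(Z^{i+1},R)=0$ for every $i$. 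Since the $Z^i$'s range over all $\infty$-syzygies, this yields $\tac$ iff every $\infty$-syzygy $M$ satisfies $\Ext_R^1(M,R)=0$. The remaining equivalence with ``totally reflexive'' is then essentially automatic: total reflexivity trivially implies $\Ext_R^1(-,R)=0$, and conversely under $\tac$ any $\infty$-syzygy $M$ realizes as the kernel $Z^0$ of the (now totally acyclic) complex $G^\bullet$, which is a standard characterization of total reflexivity.

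For $\gor\Rightarrow\tac$: if $R$ is Gorenstein local of dimension $d$ and $M$ is any $\infty$-syzygy, the initial segment $0\to M\to F^0\to\cdots\to F^{d-1}\to N\to 0$ exhibits $M$ as a $d$-th syzygy of $N$, so $M$ is maximal Cohen--Macaulay. Over a Gorenstein local ring, maximal Cohen--Macaulay modules satisfy $\Ext_R^{>0}(M,R)=0$ by local duality, have $M^*$ again maximal Cohen--Macaulay with the same Ext-vanishing, and are reflexive; hence $M$ is totally reflexive, and $\tac$ follows from the equivalence just established.

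For $\tac\Rightarrow\TR$: given $M$ finitely generated with $\Ext_R^{>0}(M,R)=0$, take a projective resolution $\cdots\to P_1\to P_0\to M\to 0$ and dualize. The hypothesis makes $0\to M^*\to P_0^*\to P_1^*\to\cdots$ exact, exhibiting $M^*$ as an $\infty$-syzygy, hence totally reflexive. In particular $\Ext_R^{>0}(M^*,R)=0$ and every kernel of this coresolution of $M^*$ is totally reflexive, so dualizing a second time produces, via $P_i^{**}\cong P_i$, an exact complex $\cdots\to P_1\to P_0\to M^{**}\to 0$. A five-lemma comparison with the original resolution along the canonical map $M\to M^{**}$ forces it to be an isomorphism; combined with the $\Ext$-vanishings for $M$ and $M^*$, $M$ is totally reflexive. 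The main obstacle will be the bookkeeping in this last step: one must verify that each kernel arising in the coresolution of $M^*$ satisfies $\Ext_R^1(-,R)=0$ so that the second dualization preserves exactness, which uses precisely that these kernels are again $\infty$-syzygies and that $\tac$ is in force.
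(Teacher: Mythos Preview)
Your proof of the central biconditionals $(\mathbf{tac})\Leftrightarrow$ ``every $\infty$-syzygy is totally reflexive'' $\Leftrightarrow$ ``every $\infty$-syzygy has $\Ext^1(-,R)=0$'' matches the paper's argument essentially verbatim: both splice a free resolution onto the given coresolution, break into short exact sequences, and read off that acyclicity of the dual is controlled by $\Ext^1$ of the successive kernels.

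The two outer implications you handle differently, and in both cases more elementarily. For $(\mathbf{gor})\Rightarrow(\mathbf{tac})$, the paper simply invokes the Iyengar--Krause theorem that $\ka(\Proj R)=\kt(\Proj R)$ characterizes Gorensteinness and restricts to finitely generated projectives. Your argument instead observes that an $\infty$-syzygy over a $d$-dimensional Gorenstein local ring is a $d$th syzygy, hence maximal Cohen--Macaulay, hence totally reflexive; this is self-contained and avoids the appeal to \cite{IK}. For $(\mathbf{tac})\Rightarrow(\mathbf{tr})$, the paper uses the Auslander transpose: if $\Ext_R^{>0}(N,R)=0$ then $\tr N$ is an $\infty$-syzygy by \cite[(2.17)]{AB}, hence totally reflexive, and total reflexivity passes back along $\tr$. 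Your route is to dualize a free resolution of $M$, recognize $M^\ast$ and all intermediate kernels $(\Omega^iM)^\ast$ as $\infty$-syzygies (hence totally reflexive), dualize again, and use a five-lemma comparison along the biduality map to force $M\cong M^{\ast\ast}$. This is correct and makes no use of the transpose machinery, at the cost of a bit more bookkeeping; the paper's version is shorter because the reflexivity of $M$ is packaged into the single step ``$\tr N$ totally reflexive $\Rightarrow$ $N$ totally reflexive.''
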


\begin{proof}
Call the five conditions (1)--(5) in order.
Combining \eqref{11} and \eqref{10} shows (1) implies (2).

Let $M$ be an $\infty$-syzygy.
There is an exact sequence $0\to M\to F^0\to F^1\to\cdots$ of finitely generated free $R$-modules.
Splicing this with a minimal free resolution $\cdots\to F_1\to F_0\to M\to0$, we get an acyclic complex $(\cdots\to F_1\to F_0\to F^0\to F^1\to\cdots)$ of finitely generated free $R$-modules.
If this is totally acyclic, then $M$ is totally reflexive by \cite[(4.2.6)]{C}.
Thus (2) implies (3).
By the definition of total reflexivity, (3) implies (4).

Let $P=(\cdots\to P^i\xrightarrow{\partial^i}P^{i+1}\to\cdots)$ be an acyclic complex of finitely generated free $R$-modules.
Then for each $i$ the image $C^i$ of the map $\partial^i$ is an $\infty$-syzygy.
If $\Ext_R^1(C^i,R)=0$ for all $i$, then the complex $P^\ast$ is exact, and hence $P$ is totally acyclic.
Hence, (4) implies (2).

Let $N$ be a finitely generated $R$-module such that $\Ext_R^{>0}(N,R)=0$.
Then $\tr N$ is an $\infty$-syzygy by \cite[(2.17)]{AB}.
If $\tr N$ is totally reflexive, then so is $N$.
Therefore, (3) implies (5).
\end{proof}

Using the above proposition, we obtain the following corollary regarding the property $\tac$.

\begin{cor}\label{16}
Let $x\in\m$ be an $R$-regular element.
Then $R$ satisfies $\tac$ if and only if so does $R/(x)$.
\end{cor}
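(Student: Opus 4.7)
The plan is to reduce, via Proposition~\ref{9}, the corollary to the statement that every $\infty$-syzygy over $R$ (resp.\ $R/(x)$) is totally reflexive over the same ring, and then to shuttle this condition between the two rings using the change-of-rings results for $\gdim$ that were exploited in the proof of Proposition~\ref{8}.

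For the ``if'' direction, I would take an $\infty$-syzygy $M$ over $R$ with coresolution $0\to M\to F^0\to F^1\to\cdots$. Each image $M^i=\mathrm{im}(F^{i-1}\to F^i)$ is a submodule of a free $R$-module, so $x$ is $M^i$-regular; hence reducing each short exact sequence $0\to M^i\to F^i\to M^{i+1}\to 0$ modulo $x$ preserves exactness, and splicing the reduced sequences shows that $M/xM$ is an $\infty$-syzygy over $R/(x)$. By hypothesis $M/xM$ is totally reflexive over $R/(x)$, and \cite[(1.4.5)]{C} then gives $\gdim_R M=\gdim_{R/(x)}(M/xM)=0$.

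For the harder ``only if'' direction, I would take an $\infty$-syzygy $N$ over $R/(x)$ with coresolution $0\to N\to G^0\to G^1\to\cdots$ and aim to prove the key structural statement that $\syz_R N$ is an $\infty$-syzygy over $R$. Decomposing the coresolution into short exact sequences $0\to N^i\to G^i\to N^{i+1}\to 0$, with $N^0=N$, and applying the horseshoe lemma over $R$ to each (using minimal $R$-free resolutions of $N^i$ and $N^{i+1}$), I obtain short exact sequences $0\to\syz_R N^i\to H_i\to\syz_R N^{i+1}\to 0$. The crucial observation is that each $H_i$ is a finitely generated free $R$-module: since $G^i\cong(R/(x))^{n_i}$ admits the length-one $R$-resolution $0\to R^{n_i}\xrightarrow{x}R^{n_i}\to G^i\to 0$, Schanuel's lemma applied to the horseshoe resolution of $G^i$ forces $H_i\oplus R^{n_i}$ to be free, so $H_i$ is stably free and hence free over the local ring $R$. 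Splicing the short exact sequences produces an exact sequence $0\to\syz_R N\to H_0\to H_1\to\cdots$ of finitely generated free $R$-modules, showing that $\syz_R N$ is an $\infty$-syzygy over $R$. By the hypothesis and Proposition~\ref{9}, $\syz_R N$ is totally reflexive, so $\gdim_R N\le 1$; since $xN=0$, \cite[(1.5.3)]{C} then yields $\gdim_{R/(x)}N\le 0$.

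The main obstacle is this construction of an $\infty$-syzygy over $R$ out of one over $R/(x)$. A naive attempt to ``lift'' the coresolution of $N$ termwise to $R$ cannot work, because any nonzero $R/(x)$-module has $x$-torsion and so cannot embed in a free $R$-module. The horseshoe argument circumvents this by working with one short exact sequence at a time and exploiting that $G^i$ has projective dimension one over $R$, which makes the horseshoe first syzygy stably free.
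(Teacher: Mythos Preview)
Your proof is correct and follows essentially the same approach as the paper: both directions use Proposition~\ref{9} to reduce to the $\infty$-syzygy characterization, and the key ``only if'' step builds the coresolution of $\syz_R N$ over $R$ by applying the horseshoe lemma to the short exact sequences $0\to N^i\to G^i\to N^{i+1}\to 0$ and splicing. The paper simply asserts that the middle terms $Q^i$ are free over $R$, whereas you spell out the reason via Schanuel's lemma and $\mathrm{pd}_R G^i=1$; the paper also cites \cite[(1.4.4)]{C} in the ``if'' direction where you cite \cite[(1.4.5)]{C}, but since $x$ is $M$-regular here either reference suffices.
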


\begin{proof}
Let $M$ be an $\infty$-syzygy over $R$.
Then there exists an exact sequence $0\to M\to F^0\to F^1\to\cdots$ of finitely generated $R$-modules such that each $F^i$ is free.
Note then that $x$ is regular on $M$ and each $F^i$.
Tensoring $R/(x)$ over $R$ gives rise to an exact sequence $0\to M/xM\to F^0/xF^0\to F^1/xF^1\to\cdots$ of finitely generated $R/(x)$-modules and each $F^i/xF^i$ is free over $R/(x)$.
Hence $M/xM$ is an $\infty$-syzygy over $R/(x)$.
If $M/xM$ is totally reflexive over $R/(x)$, then $M$ is totally reflexive over $R$ by \cite[(1.4.4)]{C}.

Let $M$ be an $\infty$-syzygy over $R/(x)$.
Then there exists an exact sequence $0\to M\to P^0\to P^1\to\cdots$ of finitely generated $R/(x)$-modules such that each $P^i$ is free over $R/(x)$.
Decompose it into short exact sequences $\{0\to M^i\to P^i\to M^{i+1}\to0\}_{i\ge0}$ of $R/(x)$-modules, where $M^0=M$.
Taking the syzygy over $R$, we get short exact sequences $\{0\to \syz_RM^i\to Q^i\to \syz_RM^{i+1}\to0\}_{i\ge0}$ of $R$-modules, where $Q^i$ is free over $R$.
Splicing them gives rise to an exact sequence $0\to\syz_RM\to Q^0\to Q^1\to\cdots$ of finitely generated $R$-modules.
Hence $\syz_RM$ is an $\infty$-syzygy over $R$.
If it is totally reflexive over $R$, then $\gdim_RM\le1$ and $\gdim_{R/(x)}M=\gdim_RM-1\le0$ by \cite[(1.2.9) and (1.5.3)]{C}, so that $M$ is totally reflexive over $R/(x)$.
\end{proof}

Next we study a property that unifies both of the properties $\P$ and $\Q$ as in \eqref{6}.
Let $\E$ be a property of local rings.
We consider the following condition.
\begin{equation}\label{12}
\begin{array}{l}
\text{A local ring $R$ satisfies $\E$ if and only if so does the formal power series ring $R[\![X]\!]$,}\\
\phantom{\text{A local ring $R$ satisfies $\E$ }}\text{if and only if so does the completion $\widehat R$.} 
\end{array}
\end{equation}
Note that if $\E$ satisfies the equivalences \eqref{12}, then the implications \eqref{6} hold for $\P=\Q=\E$.
We also consider the following condition.
\begin{equation}\label{13}
\begin{array}{l}
\text{Let $(R,\m)$ be a local ring. Let $x\in\m\setminus\m^2$ be an $R$-regular element.}\\
\phantom{\text{Let $(R,\m)$ be a local ring. }}\text{Then $R$ satisfies $\E$ if and only if so does $R/(x)$.}
\end{array}
\end{equation}
These two conditions have the relationship as stated in the lemma below. 

\begin{lem}\label{15}
If \eqref{13} holds, then \eqref{12} holds as well.
\end{lem}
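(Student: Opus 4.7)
My plan is to verify the two equivalences in \eqref{12} separately, each as an application of \eqref{13}.

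The first equivalence---that $R$ satisfies $\E$ if and only if $R[\![X]\!]$ does---follows immediately from \eqref{13} applied to $R[\![X]\!]$ with the regular element $X$: its image is a nonzero basis vector of $\m_{R[\![X]\!]}/\m_{R[\![X]\!]}^2\cong\m_R/\m_R^2\oplus k\bar X$, so $X\in\m_{R[\![X]\!]}\setminus\m_{R[\![X]\!]}^2$, and $R[\![X]\!]/(X)\cong R$.

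For the second equivalence, I would let $a_1,\dots,a_d$ be a minimal system of generators of $\m_R$ and set $T=R[\![Y_1,\dots,Y_d]\!]$. Iterating the first equivalence already gives that $R$ satisfies $\E$ iff $T$ does, so it suffices to prove $T$ satisfies $\E$ iff $\widehat R$ does. I would do this by applying \eqref{13} iteratively along the sequence $Y_1-a_1,\dots,Y_d-a_d$, after verifying (a) that this sequence is $T$-regular, (b) that at each step $Y_i-a_i$ remains outside the square of the maximal ideal of the current quotient, and (c) that the final quotient $T/(Y_1-a_1,\dots,Y_d-a_d)$ is isomorphic to $\widehat R$. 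Item (b) is routine: after the identifications $\bar Y_j=\bar a_j$ imposed for $j<i$, the classes of $\bar Y_i$ and $\bar a_i$ remain distinct basis vectors of $\m/\m^2$, so $\bar Y_i-\bar a_i\ne0$.

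The hard part will be items (a) and (c). The key observation is that although the substitution $Y_i\mapsto Y_i+a_i$ is not well-defined on $T$ itself (as $R$ is not complete), it does yield an automorphism of the completion $\widehat T=\widehat R[\![Y_1,\dots,Y_d]\!]$, under which the ideal $(Y_1-a_1,\dots,Y_d-a_d)$ is carried to $(Y_1,\dots,Y_d)$. This instantly gives (a)---the sequence is regular in $\widehat T$, hence in $T$ by faithful flatness---and yields $\widehat T/(Y_1-a_1,\dots,Y_d-a_d)\widehat T\cong\widehat R$. For (c), the completion of $T/(Y_1-a_1,\dots,Y_d-a_d)$ coincides with $\widehat T/(Y_1-a_1,\dots,Y_d-a_d)\widehat T\cong\widehat R$, so the natural surjection $T/(Y_1-a_1,\dots,Y_d-a_d)\to\widehat R$ induced by $Y_i\mapsto a_i$ is precisely the completion map, which is injective since the source is a noetherian local ring. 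Chaining the equivalences from \eqref{13} thus gives $T\iff\widehat R$, which combined with $R\iff T$ yields $R\iff\widehat R$.
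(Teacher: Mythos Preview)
Your proof is correct and follows essentially the same route as the paper: both prove the first equivalence by applying \eqref{13} to $R[\![X]\!]$ with $x=X$, and both prove the second by writing $\widehat R$ as $T/(Y_1-a_1,\dots,Y_d-a_d)$ with $T=R[\![Y_1,\dots,Y_d]\!]$ and then applying \eqref{13} iteratively along the $Y_i-a_i$. The only difference is that the paper obtains the isomorphism $T/(Y_i-a_i)\cong\widehat R$ by citing \cite[Theorem 8.12]{M} and simply asserts the regularity and $\m_i\setminus\m_i^2$ conditions, whereas you supply these details yourself via the automorphism of $\widehat T=\widehat R[\![Y_1,\dots,Y_d]\!]$ and faithful flatness.
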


\begin{proof}
We divide the proof into two steps; combining (1) and (2) below completes the proof of the lemma.

(1) We show that a local ring $R$ satisfies $\E$ if and only if the formal power series ring $R[\![X]\!]$ satisfies $\E$.
Let $\M$ be the maximal ideal of $R[\![X]\!]$.
Then $X$ belongs to $\M\setminus\M^2$ and is $R[\![X]\!]$-regular.
Considering \eqref{13} for the local ring $R[\![X]\!]$, we see that $R[\![X]\!]$ satisfies $\E$ if and only if so does $R[\![X]\!]/(X)\cong R$.

(2) We show that a local ring $R$ satisfies $\E$ if and only if the completion $\widehat R$ satisfies $\E$.
By \cite[Theorem 8.12]{M} we have $\widehat R\cong R[\![X_1,\dots,X_n]\!]/(X_1-a_1,\dots,X_n-a_n)$, where $R[\![X_1,\dots,X_n]\!]$ is a formal power series ring and $a_1,\dots,a_n$ is a system of generators of $\m$.
It is seen from (1) that $R$ satisfies $\E$ if and only if so does $R[\![X_1,\dots,X_n]\!]$.
For each $1\le i\le n$, set $R_i=R[\![X_1,\dots,X_n]\!]/(X_1-a_1,\dots,X_{i-1}-a_{i-1})$ and let $\m_i$ be the maximal ideal of $R_i$.
Then the image of $X_i-a_i$ in $R_i$ belongs to $\m_i\setminus\m_i^2$ and $R_i$-regular.
Considering \eqref{13} for the local ring $R_i$, we observe that $R_i$ satisfies $\E$ if and only if so does $R_i/(X_i-a_i)R_i\cong R_{i+1}$.
Hence $R[\![X_1,\dots,X_n]\!]=R_1$ satisfies $\E$ if and only if so does $R_{n+1}=\widehat R$.
\end{proof}

The following result says that each of the properies that have been introduced so far is preserved under the formal power series extension and the completion.

\begin{thm}\label{14}
The statement \eqref{12} holds for $\E\in\{\ab,\ci,\cm,\ee,\gap,\gor,\tac,\te,\TR\}$.
\end{thm}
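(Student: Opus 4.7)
The plan is to verify the statement \eqref{12} for each of the nine properties, invoking Lemma \ref{15} whenever the equivalence \eqref{13} is available in both directions and otherwise assembling the implications directly from the propositions of this section.

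For $\E\in\{\ci,\cm,\gor\}$ the equivalence \eqref{13} is classical. For $\E\in\{\TR,\tac\}$ it is supplied by Proposition \ref{8}(2) and Corollary \ref{16}, respectively (in fact for every regular element $x$, not only for $x\in\m\setminus\m^2$). In all five of these cases Lemma \ref{15} immediately yields \eqref{12}.

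For $\E\in\{\ee,\te\}$ I will assemble \eqref{12} directly from the four parts of Proposition \ref{7} (resp.\ Proposition \ref{7'}): part (2), applied to the faithfully flat extensions $R\to R[\![X]\!]$ and $R\to\widehat R$, gives the descent implications $R[\![X]\!]\Rightarrow R$ and $\widehat R\Rightarrow R$; part (3), applied with the $R[\![X]\!]$-regular element $X$ and the identification $R[\![X]\!]/(X)\cong R$, gives $R\Rightarrow R[\![X]\!]$; and part (4) gives $R\Rightarrow\widehat R$. Note that I do not need the full equivalence \eqref{13} in this case, only its ``$R/(x)\Rightarrow R$'' half, which is precisely what Propositions \ref{7}(3) and \ref{7'}(3) provide.

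For $\E\in\{\ab,\gap\}$, the Gorenstein requirement is handled by the case $\gor$ above. It remains to transfer the uniform bound -- on Ext-vanishing for $\ab$, on Ext-gaps for $\gap$ -- through each of the two operations. Descent from the larger ring is immediate: by faithful flatness the bound is preserved with no shift. In the opposite direction, I will adapt the reduction of Proposition \ref{7}(4) -- pass to MCM syzygies, quotient by a system of parameters to land in finite-length modules, and invoke Lemmas \ref{1} and \ref{2} -- but now tracking explicitly how the relevant numerical invariant shifts at each step. Each application of Lemma \ref{1}(1) (taking a syzygy) shifts the bound by one, and each application of Lemma \ref{1}(2) (killing a regular element) shifts it by at most one, so the total shift is controlled by $\dim R$ alone and is independent of the modules. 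Similarly, for the $R\Rightarrow R[\![X]\!]$ step I adapt the proof of Proposition \ref{7}(3) with $x=X$ and track bounds. The main obstacle is verifying this uniformity carefully, since Lemmas \ref{1} and \ref{2} as stated only record vanishing ``at infinity''; once the shifts are seen to depend only on $R$, a uniform Ext-bound (resp.\ a finite Ext-gap) on $R$ produces one on $R[\![X]\!]$ and $\widehat R$, completing the verification of \eqref{12} for the remaining two properties.
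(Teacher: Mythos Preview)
Your treatment of $\ci$, $\cm$, $\gor$, $\TR$, $\tac$, $\ee$, and $\te$ matches the paper's proof exactly: \eqref{13} plus Lemma~\ref{15} for the first five, and a direct assembly of \eqref{12} from the four parts of Propositions~\ref{7} and~\ref{7'} for the last two.

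The divergence is in $\ab$ and $\gap$. The paper does not re-run the syzygy/parameter reduction with bound-tracking; it simply cites \cite[Proposition~3.3(1),(3)]{HJ}, which already establishes the full equivalence \eqref{13} for these two properties, and then invokes Lemma~\ref{15}. Your route---adapting the proofs of Proposition~\ref{7}(3),(4) while keeping track of how the AB bound (resp.\ the Ext-gap) shifts---is in principle sound: over a Gorenstein ring one can take $d$th syzygies uniformly (with $d=\dim R$) to reach maximal Cohen--Macaulay modules, each passage through Lemma~\ref{1}(1) shifts indices by the number of syzygies taken, each regular element contributes a shift of one via the long exact sequence, and Lemma~\ref{2} introduces no shift; so the total displacement is a function of $d$ only. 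This gives a self-contained argument avoiding the external reference, at the cost of several pages of bookkeeping that \cite{HJ} has already done. Two small caveats on your sketch: the phrase ``each application of Lemma~\ref{1}(1) shifts the bound by one'' undersells it---the shift is by the syzygy index, which you must fix uniformly at $d$; and for $\gap$ you need to check that a gap of length $h$ for $(M,N)$ over $\widehat R$ produces a gap of length at least $h-c(d)$ for the reduced finite-length pair over $R$, which requires a bit more care than the AB case since non-vanishing, not just vanishing, must be transported through the exact sequences.
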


\begin{proof}
Proposition \ref{7}(2) implies that if the formal power series ring $R[\![X]\!]$ over a local ring $R$ satisfies $\ee$, then so does $R$, while the converse holds by Proposition \ref{7}(3) and the isomorphism $R[\![X]\!]/(X)\cong R$.
Combining this with Proposition \ref{7}(4) shows that \eqref{12} holds for $\ee$.
An analogous argument using Proposition \ref{7'} instead of Proposition \ref{7} shows that \eqref{12} holds for $\te$.

We shall show that \eqref{13} actually holds for the remaining seven properties; then so does \eqref{12} by Lemma \ref{15}.
It follows from (1) and (3) of \cite[Proposition 3.3]{HJ} that \eqref{13} holds for $\ab$ and $\gap$, respectively.
Fundamental facts say that \eqref{13} holds for $\ci$, $\cm$ and $\gor$.
It is stated in Corollary \ref{16} and Proposition \ref{8}(2) respectively that \eqref{13} holds for $\tac$ and $\TR$.
\end{proof}

\begin{rem}
We do not know whether \eqref{13} holds for $\ee$ or $\te$.
More precisely, we have no idea how to prove that if a local ring $(R,\m)$ satisfies $\ee$/$\te$, then so does $R/(x)$ for each $R$-regular element $x\in\m\setminus\m^2$.
Suppose that $R$ satisfies $\ee$ and let $M,N$ be finitely generated $R/(x)$-modules such that $\Ext_{R/(x)}^{\gg0}(M,N)=0$.
Then, as in \cite[(1.3)]{HJ}, there is an exact sequence $\cdots\to\Ext_{R/(x)}^{i}(M,N)\to\Ext_R^{i}(M,N)\to\Ext_{R/(x)}^{i-1}(M,N)\to\cdots$, which implies $\Ext_R^{\gg0}(M,N)=0$.
Since the ring $R$ satisfies $\ee$, we get $\Ext_R^{\gg0}(N,M)=0$.
As $R$ is Gorenstein by Proposition \ref{7}(1), we see that $\Ext_R^{\gg0}(N,\syz_RM)=0$, and $\Ext_{R/(x)}^{\gg0}(N,\syz_RM/x\syz_RM)=0$ by \cite[\S18, Lemma 2]{M}.
Let $\cdots\to F_1\to F_0\to M\to0$ be a minimal free resolution of $M$ as an $R$-module.
This gives an exact sequence $0\to\syz_RM\to F_0\to M\to0$.
The chain map given by the multiplication by $x$ yields an exact sequence $\sigma:0\to M\to\syz_RM/x\syz_RM\to\syz_{R/(x)}M\to0$ of $R/(x)$-modules.
If $\sigma$ splits, then we will get that $\Ext_{R/(x)}^{\gg0}(N,M)=0$ and conclude that $R$ satisfies $\ee$.
However, $\sigma$ does not split in general.
\end{rem}

\section{On a question of Araya and the depth formula}

In a private communication of Justin Lyle with Tokuji Araya, the following question is posed.

\begin{ques}[Araya]\label{18}
Let $R$ be a local ring of positive depth.
Let $M$ be a finitely generated $R$-module such that $\Ext_R^{>0}(M,R)=0$.
Then must $M$ have positive depth?
\end{ques}

We relate this question with the property $\TR$.
For this we prepare two lemmas.
The first one is elementary.

\begin{lem}\label{21}
Let $R$ be a local ring with maximal ideal $\m$ such that $\depth R>0$.
Let $M$ be a finitely generated $R$-module such that $\depth M>0$ and $M\in\mod_0R$.
Then $M$ is torsionless.
\end{lem}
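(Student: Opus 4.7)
The plan is to analyze the kernel of the biduality map $\eta_M\colon M\to M^{\ast\ast}$ and show that the hypotheses force it to vanish; since being torsionless is exactly the injectivity of $\eta_M$, this will finish the proof.

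First I would invoke the hypothesis $M\in\mod_0R$, which (in its standard meaning) says that $M_\p$ is a free $R_\p$-module for every prime $\p\ne\m$. Since $\Hom_R(-,R)$ commutes with localization on finitely generated modules, one obtains natural identifications $(M^\ast)_\p\cong(M_\p)^\ast$ and $(M^{\ast\ast})_\p\cong(M_\p)^{\ast\ast}$, under which $(\eta_M)_\p$ becomes the biduality map of $M_\p$. A free module is reflexive, so $(\eta_M)_\p$ is an isomorphism for every $\p\ne\m$.

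Next, set $K:=\ker\eta_M$. The previous step yields $K_\p=0$ for every $\p\ne\m$, hence $\operatorname{Supp}K\subseteq\{\m\}$ and therefore $\ass K\subseteq\{\m\}$. On the other hand, $K$ is a submodule of $M$, so $\ass K\subseteq\ass M$; but the hypothesis $\depth M>0$ gives $\m\notin\ass M$. Combining the two inclusions yields $\ass K=\emptyset$, so $K=0$ and $\eta_M$ is injective.

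I do not foresee a serious obstacle. The only technical point worth double-checking is the compatibility of the biduality map with localization, which is routine for finitely generated modules over a noetherian ring. Notably, the hypothesis $\depth R>0$ does not appear to enter this argument directly; it is presumably recorded as a running hypothesis to match the companion statements in this section (e.g.\ Question~\ref{18}) rather than being used within this particular proof.
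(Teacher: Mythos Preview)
Your proof is correct and more direct than the paper's. The paper verifies the local criterion for torsionlessness from \cite[Proposition 1.4.1]{BH}: for each $\p\in\ass R$ it checks that $M_\p$ is torsionless, and for each $\p$ with $\depth R_\p>0$ it checks that $\depth M_\p>0$, in both cases reducing to the freeness of $M_\p$ when $\p\ne\m$. Your argument bypasses that criterion entirely, working directly with $K=\ker\eta_M$ and using only that $K$ is supported at $\m$ while $\m\notin\ass M$. A pleasant byproduct is that your approach makes the hypothesis $\depth R>0$ visibly redundant, whereas the paper's proof genuinely uses it (to conclude $\p\ne\m$ when $\p\in\ass R$); so your route in fact establishes a slightly stronger statement.
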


\begin{proof}
Fix a prime ideal $\p$ of $R$.
First, assume that $\p\in\ass R$.
Then $\p\ne\m$, and hence $M_\p$ is $R_\p$-free.
In particular, the $R_\p$-module $M_\p$ is torsionless.
Next, assume that $\depth R_\p>0$.
Suppose that $\depth M_\p=0$.
Then $\p\ne\m$, and hence $M_\p$ is $R_\p$-free.
We have $0=\depth M_\p=\depth R_\p>0$, which is a contradiction.
Therefore, we must have $\depth M_\p>0$.
It follows from \cite[Proposition 1.4.1]{BH} that $M$ is torsionless.
\end{proof}

The second lemma is shown by a similar argument as in the proof of \cite[Theorem 4.3]{res}.
To prove the lemma, we need to recall some notation and terminology.
We denote by $\mod R$ the category of finitely generated $R$-modules, and by $\mod_0R$ the full subcategory of $\mod R$ consiting of modules which are locally free on the punctured spectrum of $R$.
A {\em resolving subcategory} of $\mod R$ is by definition a full subcategory of $\mod R$ containing $R$ and closed under direct summands, extensions and syzygies.
For a finitely generated $R$-module $M$, we denote by $\res M$ the {\em resolving closure} of $M$, that is, the smallest resolving subcategory of $\mod R$ containing $M$, and by $\nf(M)$ the {\em nonfree locus} of $M$, that is, the set of prime ideals $\p$ of $R$ such that the $R_\p$-module $M_\p$ is nonfree.
The subset $\nf(M)$ of $\spec R$ is Zariski-closed; see \cite[Corollary 2.11]{res}.

\begin{lem}\label{20}
Let $(R,\m)$ be a local ring with $\depth R>0$.
Let $M$ be an $R$-module such that $\depth M>0$ and $\gdim M=\infty$.
Then there exists an $R$-module $N\in\res M\cap\mod_0R$ such that $\depth N>0$ and $\gdim N=\infty$.
\end{lem}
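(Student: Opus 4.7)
The plan is to proceed by noetherian induction on the closed subset $\nf(M)\subseteq\spec R$. If $\nf(M)\subseteq\{\m\}$ then $M\in\mod_0R$ and we take $N=M$. Otherwise, I will construct $M'\in\res M$ with $\depth M'>0$, $\gdim M'=\infty$, and $\nf(M')\subsetneq\nf(M)$; the induction hypothesis applied to $M'$, combined with $\res M'\subseteq\res M$, then yields the desired $N$.

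To build $M'$, pick $\p\in\nf(M)\setminus\{\m\}$. Since $\depth R,\depth M>0$ we have $\m\notin\ass R\cup\ass M$, so prime avoidance furnishes $x\in\m\setminus\p$ that is both $R$- and $M$-regular. I set $M'=\syz(M/xM)$. The depth and G-dimension checks are routine: because $x$ is $M$-regular, $\depth(M/xM)=\depth M-1\ge0$, and the depth lemma applied to the defining sequence $0\to\syz(M/xM)\to F\to M/xM\to0$ (with $F$ free) yields $\depth\syz(M/xM)\ge\min(\depth R,\depth M)\ge1$. For the G-dimension, \cite[(1.4.5)]{C} gives $\gdim_RM=\gdim_{R/(x)}(M/xM)=\infty$, and then \cite[(1.5.3)]{C} applied to the $R/(x)$-module $M/xM$ gives $\gdim_R(M/xM)=\gdim_{R/(x)}(M/xM)+1=\infty$, whence $\gdim_R\syz(M/xM)=\infty$. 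For the strict inclusion $\nf(M')\subsetneq\nf(M)$ I claim $\nf(\syz(M/xM))\subseteq\nf(M)\cap V(x)$, which is strict because $\p\notin V(x)$. Indeed, at a prime $\mathfrak{q}$: if $x\notin\mathfrak{q}$ then $(M/xM)_\mathfrak{q}=0$ and $\syz(M/xM)_\mathfrak{q}$ is the kernel of a surjection from a free $R_\mathfrak{q}$-module onto zero, hence free; if $x\in\mathfrak{q}$ but $\mathfrak{q}\notin\nf(M)$, then $M_\mathfrak{q}$ is $R_\mathfrak{q}$-free and the localized regular-sequence sequence $0\to M_\mathfrak{q}\xrightarrow{x}M_\mathfrak{q}\to(M/xM)_\mathfrak{q}\to0$ exhibits the free module $M_\mathfrak{q}$ as a syzygy of $(M/xM)_\mathfrak{q}$ over $R_\mathfrak{q}$.

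I expect the main obstacle to be establishing $M'\in\res M$. The plan is to combine the defining sequence $0\to\syz(M/xM)\to F\to M/xM\to0$ with the regular-sequence extension $0\to M\xrightarrow{x}M\to M/xM\to0$ via a pullback, producing a short exact sequence $0\to\syz(M/xM)\to M\oplus F\to M\to0$. One then invokes the (standard but not entirely obvious) fact that a resolving subcategory $\X$ is closed under kernels of surjections: given $0\to A\to B\to C\to0$ with $B,C\in\X$, one takes $0\to\syz C\to F'\to C\to0$ and forms the pullback $P=B\times_C F'\cong A\oplus F'$; the resulting sequence $0\to\syz C\to A\oplus F'\to B\to0$ then forces $A\oplus F'\in\X$ by extension-closure and hence $A\in\X$ by summand-closure. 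Applied with $B=M\oplus F$ and $C=M$, both in $\res M$, this yields $M'\in\res M$ and closes the induction.
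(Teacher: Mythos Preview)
Your argument is correct, and the overall strategy---noetherian induction on $\nf(M)$, shrinking the nonfree locus by choosing a regular element $x\in\m\setminus\p$ for some nonmaximal $\p\in\nf(M)$---is the same as the paper's. The difference lies in the replacement module. The paper forms the pushout of $\syz M\hookrightarrow R^{\oplus n}$ along multiplication by $x$ on $\syz M$, obtaining $N$ which sits in an extension $0\to\syz M\to N\to M\to 0$; this makes $N\in\res M$ immediate and requires only that $x$ be $R$-regular (hence automatically $\syz M$-regular). You instead take $M'=\syz_R(M/xM)$ and reach membership in $\res M$ via the pullback sequence $0\to M'\to M\oplus F\to M\to 0$ together with the (correctly supplied) fact that resolving subcategories are closed under kernels of epimorphisms; this needs $x$ to be $M$-regular as well, which you secure by prime avoidance using $\depth M>0$. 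The two constructions are in a sense dual (pushout along $\syz M$ versus pullback along $M$), and each buys something: the paper's extension description makes $N\in\res M$ transparent, while your choice of $M'$ makes the inclusion $\nf(M')\subseteq\nf(M)\cap V(x)$ especially clean since $M/xM$ is supported on $V(x)$.
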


\begin{proof}
If $M\in\mod_0R$, letting $N=M$ completes the proof.
Assume $M\notin\mod_0R$.
Then there is a prime ideal $\p$ of $R$ with $\m\ne\p\in\nf(M)$.
We find an $R$-regular element $x\in\m\setminus\p$.
Consider the pushout diagram:
$$
\xymatrix{
0\ar[r]& \syz M\ar[r]\ar[d]^x& R^{\oplus n}\ar[r]\ar[d]& M\ar[r]\ar@{=}[d]& 0\\
0\ar[r]& \syz M\ar[r]& N\ar[r]& M\ar[r]& 0
}
$$
The bottom row shows $\depth N>0$ and $N\in\res M$.
We observe $\nf(N)\subseteq\nf(M)$ and $N_\p\cong R_\p^{\oplus n}$.
Hence $\nf(N)\subsetneq\nf(M)$.
Applying the snake lemma to the diagram, we get an exact sequence $0\to R^{\oplus n}\to N\to\syz M/x\syz M\to0$.
The bottom row again shows that if $\gdim N<\infty$, then $\gdim M<\infty$, a contradiction.
Hence $\gdim N=\infty$.
If $N\in\mod_0R$, then we are done.
If $N\notin\mod_0R$, then applying the above argument to $N$ gives us an $R$-module $L\in\res N\subseteq\res M$ such that $\nf(L)\subsetneq\nf(N)$, $\depth L>0$ and $\gdim L=\infty$.
It is impossible to repeat this argument infinitely many times, because otherwise we get a chain $\cdots\subsetneq\nf(L)\subsetneq\nf(N)\subsetneq\nf(M)$ of Zariski-closed sets, which contradicts the fact that $\spec R$ is a noetherian space.
\end{proof}

We denote by $\add R$ the full subcategory of $\mod R$ consisting of free modules, and by $(-)^\ast$ the $R$-dual functor $\Hom_R(-,R)$.
The {\em first cosyzygy} $\syz^{-1}M$ of a finitely generated $R$-module $M$ is defined as the cokernel of a {\em left $\add R$-approximation} (or {\em $\add R$-preenvelope}) $f:M\to F$, that is, $f$ is a morphism in $\mod R$ with $F$ free such that $f^\ast:F^\ast\to M^\ast$ is surjective.
For an integer $n\ge2$ the {\em $n$th cosyzygy} $\syz^{-n}M$ is defined inductively by $\syz^{-n}M=\syz^{-1}(\syz^{-(n-1)}M)$.
It is known that $\syz^{-1}M$ always exists and is isomorphic to $\tr\syz\tr M$ up to free summands, so that $\Ext_R^1(\syz^{-1}M,R)=0$.
For the details of cosyzygies, we refer the reader to \cite{AB,syz}.

In the theorem below, we obtain characterizations of local rings of positive depth that satisfy $\TR$.

\begin{thm}\label{22}
Let $R$ be a local ring with $\depth R>0$.
Then the following are equivalent.
\begin{enumerate}[\rm(1)]
\item
Every finitely generated $R$-module $M$ with $\Ext_R^{>0}(M,R)=0$ is totally reflexive, that is, $R$ satisfies $\TR$.
\item
Every finitely generated $R$-module $M$ with $\Ext_R^{>0}(M,R)=0$ satisfies $\depth M>0$.
\end{enumerate}
In particular, Question \ref{18} has an affirmative answer for $R$ if and only if $R$ satisfies $\TR$.
\end{thm}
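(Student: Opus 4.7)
The implication (1)$\Rightarrow$(2) is immediate from the Auslander--Bridger formula: under (1), any finitely generated $M$ with $\Ext_R^{>0}(M,R)=0$ is totally reflexive, so $\gdim_R M=0$ and hence $\depth M=\depth R>0$ when $M$ is nonzero (and $\depth M=\infty$ when $M=0$). The ``in particular'' clause is then just a rewording, since Question \ref{18} literally asks for condition (2).

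I plan to prove (2)$\Rightarrow$(1) by contrapositive, producing a finitely generated $L$ with $\Ext_R^{>0}(L,R)=0$ and $\depth L=0$ from any $M$ witnessing the failure of $\TR$. Pick such an $M$ with $\Ext_R^{>0}(M,R)=0$ and $\gdim_R M>0$; this forces $\gdim_R M=\infty$. If $\depth M=0$ take $L=M$; otherwise apply Lemma \ref{20} to obtain $N\in\res M\cap\mod_0 R$ with $\depth N>0$ and $\gdim_R N=\infty$. The class $\{X\in\mod R:\Ext_R^{>0}(X,R)=0\}$ is a resolving subcategory of $\mod R$ (the standard $\Ext$ long exact sequences ensure closure under summands, extensions and syzygies), so it contains $\res M$, giving $\Ext_R^{>0}(N,R)=0$. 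By Lemma \ref{21}, $N$ is torsionless. Now iterate a cosyzygy construction: using torsionlessness, choose a surjection $R^{m_0}\twoheadrightarrow N^*$ and compose $N\hookrightarrow N^{**}\hookrightarrow R^{m_0}$ to form $\syz^{-1}N=R^{m_0}/N$. A long-exact-sequence computation yields $\Ext_R^{>0}(\syz^{-1}N,R)=0$, and a local splitting argument at each $\p\ne\m$---where $N_\p$ is $R_\p$-free, $(\syz^{-1}N)_\p$ has projective dimension at most $1$, and vanishing of $\Ext^1_{R_\p}((\syz^{-1}N)_\p,R_\p)$ produces a section of the surjection $R_\p^{m_0}\twoheadrightarrow N_\p^*$---places $\syz^{-1}N$ in $\mod_0 R$. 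As long as each new $\syz^{-k}N$ still has positive depth, Lemma \ref{21} lets the process continue.

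If at some stage $\depth\syz^{-k}N=0$, take $L=\syz^{-k}N$; this is nonzero, for otherwise the resulting finite exact sequence $0\to N\to R^{m_0}\to\cdots\to R^{m_{k-1}}\to0$ would dimension-shift to $\gdim_R N<\infty$, contradicting $\gdim_R N=\infty$. If the iteration runs forever with positive depths, splice the resulting infinite free coresolution with a minimal free resolution of $N$ to obtain an acyclic complex $\cdots\to F_1\to F_0\to R^{m_0}\to R^{m_1}\to\cdots$ of finitely generated free $R$-modules. Its cycles are the ordinary syzygies $\syz^kN$ (with $\Ext_R^{>0}(-,R)=0$ by dimension shifting from $\Ext_R^{>0}(N,R)=0$) on one side and the constructed $\syz^{-k}N$ (with $\Ext_R^{>0}(-,R)=0$ by construction) on the other, so the $R$-dual complex is also acyclic. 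The complex is therefore totally acyclic, and $N$ is its $0$-th cycle, making $N$ totally reflexive by \cite[(4.2.6)]{C}---contradicting $\gdim_R N=\infty$. The main obstacle will be the inductive bookkeeping in the iteration: at every stage one must preserve both $\mod_0 R$-membership and the vanishing $\Ext_R^{>0}(-,R)=0$ so that Lemma \ref{21} stays applicable whenever the depth is positive; the delicate piece is the local-freeness propagation at non-maximal primes, which rests on the splitting argument described above. Once this is secured, the totally acyclic complex at the end delivers the contradiction cleanly.
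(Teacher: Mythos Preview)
Your proposal is correct and follows essentially the same route as the paper: reduce via Lemma \ref{20} to a module $N\in\mod_0R$ with $\Ext_R^{>0}(N,R)=0$, $\depth N>0$, $\gdim N=\infty$, then iterate the cosyzygy construction using Lemma \ref{21} to build a complete resolution of $N$ and reach the contradiction that $N$ is totally reflexive. The only cosmetic difference is the logical packaging: the paper assumes (2) and not-(1) and runs the iteration knowing each cosyzygy has positive depth, whereas you argue the contrapositive and carry the extra case ``$\depth\syz^{-k}N=0$ at some finite stage'' (which simply hands you the witness $L$); this extra case is harmless, and your added detail on why $\syz^{-1}N\in\mod_0R$ is a welcome elaboration of a step the paper states without proof.
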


\begin{proof}
It is clear that (1) implies (2).
Assume that (1) does not hold but (2) does.
Then there is an $R$-module $M$ which is not totally reflexive but satisfies $\Ext_R^{>0}(M,R)=0$.
It is seen from \cite[(1.2.7)]{C} that $\gdim M=\infty$.
By (2) we have $\depth M>0$.
Also, $M$ belongs to the full subcategory $\X$ of $\mod R$ consisting of modules $X$ with $\Ext_R^{>0}(X,R)=0$.
As $\X$ is resolving, by Lemma \ref{20} we find an $R$-module $N\in\X\cap\mod_0R$, $\depth N>0$ and $\gdim N=\infty$.
Lemma \ref{21} implies that $N$ is torsionless.
Therefore, there is an exact sequence $0\to N\to F^0\to\syz^{-1}N\to0$ with $F^0$ free.
This exact sequence implies $\gdim\syz^{-1}N=\infty$ and $\Ext^{>0}(\syz^{-1}N,R)=0$, while we see that $\syz^{-1}N\in\mod_0R$.
By (2) again we have $\depth\syz^{-1}N>0$.
Applying Lemma \ref{21} again, we see that $\syz^{-1}N$ is torsionless, and get an exact sequence $0\to\syz^{-1}N\to F^1\to\syz^{-2}N\to0$ with $F^1$ free.
Iterating this procedure yields an exact sequence
$$
0\to N\to F^0\xrightarrow{\partial^1}F^1\xrightarrow{\partial^2}F^2\xrightarrow{\partial^3}\cdots
$$
such that for each $i>0$ we have that $F^i$ is free, the image of $\partial^i$ is $\syz^{-i}N$, and $\Ext^{>0}(\syz^{-i}N,R)=0$.
Applying the functor $(-)^\ast$ gives rise to an exact sequence $\cdots\to(F^2)^\ast\to(F^1)^\ast\to(F^0)^\ast\to N^\ast\to0$, and applying $(-)^\ast$ again restores the original exact sequence.
This shows that $N$ is totally reflexive.
However, this contradicts the fact that $\gdim N=\infty$.
We now conclude that (2) implies (1).
\end{proof}

We can show the following proposition on the relationship between $\dep$ and $\TR$.

\begin{prop}\label{17}
Let $R$ be a Cohen--Macaulay local ring of positive dimension.
If $R$ satisfies $\dep$, then it satisfies $\TR$.
\end{prop}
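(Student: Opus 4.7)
The plan is to combine Theorem \ref{22} with the depth formula supplied by $\dep$ to force a numerical contradiction. Since $R$ is Cohen--Macaulay of positive dimension, $\depth R = d \ge 1$, so Theorem \ref{22} reduces the statement to showing: every finitely generated $R$-module $M$ with $\Ext_R^{>0}(M,R)=0$ has $\depth M>0$. I argue by contrapositive: assuming $M$ is such a module with $\depth M=0$, I will produce finitely generated $R$-modules $X$ and $Y$ with $\Tor_{>0}^R(X,Y)=0$, $X\otimes_R Y\ne 0$, and $\depth X+\depth Y<\depth R$, contradicting $\dep$.

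The natural source of candidate modules comes from dualizing a free resolution $F_\bullet\to M$: since $\Ext_R^{>0}(M,R)=0$, the sequence
\[
0\to M^\ast\to F_0^\ast\to F_1^\ast\to F_2^\ast\to\cdots
\]
is exact, so $M^\ast$ is an $\infty$-syzygy. Set $K_i:=\coker(F_{i-1}^\ast\to F_i^\ast)$ (with $K_{-1}:=M^\ast$); these fit into short exact sequences $0\to K_{i-1}\to F_i^\ast\to K_i\to 0$. The depth lemma applied iteratively yields $\depth K_i\ge\min(d,\depth K_{i-1}-1)$, so $\depth K_i<d$ for $i$ sufficiently large. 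Tensoring with $M$ and using freeness of $F_i^\ast$ yields the dimension-shift
\[
\Tor_j^R(K_i,M)\cong\Tor_{j-1}^R(K_{i-1},M)\quad(j\ge 2).
\]

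The aim is to take $X=M$ and $Y=K_i$ (or a nearby modification) for suitable $i$ so that $\Tor_{>0}^R(M,Y)=0$ and $\depth Y<d$. Once this is arranged, $\dep$ forces
\[
\depth(M\otimes_R Y)=\depth M+\depth Y-d=\depth Y-d<0,
\]
which is impossible provided $M\otimes_R Y\ne 0$, yielding the desired contradiction.

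The technical heart of the argument is arranging the Tor vanishing. The dimension shift reduces higher Tors to $\Tor_1^R(K_j,M)$ terms, and $\Tor_1^R(K_0,M)$ coincides with the kernel of the map $M^\ast\otimes_R M\to F_0^\ast\otimes_R M\cong\Hom_R(F_0,M)$, equivalently with the kernel of the evaluation $M^\ast\otimes_R M\to\End_R M$, which is not forced to vanish in general. The main obstacle is thus eliminating these low-degree Tor obstructions; I expect this to require replacing $M$ by a module in $\res M\cap\mod_0 R$ supplied by Lemma \ref{20} (so the resulting module is locally free on the punctured spectrum while retaining the pathological behavior), then using Lemma \ref{21} together with the resulting torsionlessness to construct the required Tor-vanishing pair. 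Once this step is in hand, the numerical clash with the depth formula closes the proof.
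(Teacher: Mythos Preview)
Your plan has the right opening move---invoking Theorem \ref{22} to get a module $M$ with $\Ext_R^{>0}(M,R)=0$ and $\depth M=0$, then seeking a Tor-partner $Y$ so that the depth formula yields a negative number---but the construction of $Y$ is left genuinely open, and the sketch you give does not close it.

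Two concrete issues. First, the depth lemma step is misapplied: from $0\to K_{i-1}\to F_i^\ast\to K_i\to 0$ you get the \emph{lower} bound $\depth K_i\ge\min(\depth K_{i-1}-1,d)$, which says nothing about $\depth K_i<d$; in fact if every $K_i$ were maximal Cohen--Macaulay your inequality would still hold. Second, and more seriously, the Tor vanishing $\Tor_{>0}^R(M,K_i)=0$ is exactly what is hard, as you acknowledge; the gestures toward Lemma \ref{20} and Lemma \ref{21} replace $M$ by a module locally free on the punctured spectrum, but this controls Ext-support, not the low-degree Tor kernels you need to kill. There is no mechanism in your outline that forces $\Tor_1^R(K_j,M)=0$ in general.

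The paper sidesteps both difficulties by choosing $Y$ extrinsically rather than building it from the free resolution of $M$. Pass to the completion $\widehat R$, which has a canonical module $\omega$. A result of Lyle--Monta\~no (\cite[Lemma 3.4]{LM}) converts $\Ext_{\widehat R}^{>0}(\widehat M,\widehat R)=0$ directly into $\Tor_{>0}^{\widehat R}(\widehat M,\omega)=0$ with $\widehat M\otimes_{\widehat R}\omega$ maximal Cohen--Macaulay. Now take a system of parameters $\xx$ of $R$; since $\xx$ is regular on $\omega$ and on $\widehat M\otimes_{\widehat R}\omega$, a derived-tensor computation descends this to $\Tor_{>0}^R(M,\omega/\xx\omega)=0$ over $R$ itself, where $\omega/\xx\omega$ is a finite-length (hence finitely generated) $R$-module. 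The depth formula then reads $0+0-d=-d<0$, a contradiction. The point is that the canonical module is precisely the object engineered to make Ext-vanishing into Tor-vanishing; trying to extract a Tor-partner from $F_\bullet^\ast$ alone lacks such a bridge.
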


\begin{proof}
Suppose that $R$ does not satisfy $\TR$.
Then there exists an $R$-module $M$ such that $\Ext_R^{>0}(M,R)=0$ and $\depth M=0$ by Theorem \ref{22}.
Since $\widehat R$ is complete, it admits a canonical module $\omega$.
Take a system of parameters $\xx=x_1,\dots,x_d$ of $R$, where $d=\dim R>0$.
Note that $\xx$ is an $R$-sequence, an $\widehat R$-sequence and an $\omega$-sequence.
We have $\Ext_{\widehat R}^{>0}(\widehat M,\widehat R)=0$.
It follows from \cite[Lemma 3.4]{LM} that $\Tor_{>0}^{\widehat R}(\widehat M,\omega)=0$ and $\widehat M\otimes_{\widehat R}\omega$ is maximal Cohen--Macaulay.
Hence $\xx$ is an $\widehat M\otimes_{\widehat R}\omega$-sequence.
There are isomorphisms
\begin{align*}
M\lten_R\omega/\xx\omega
&\cong M\lten_R\omega\lten_{\widehat R}\widehat R/\xx\widehat R
\cong M\lten_R\widehat R\lten_{\widehat R}\omega\lten_{\widehat R}\widehat R/\xx\widehat R
\cong(\widehat M\lten_{\widehat R}\omega)\lten_{\widehat R}\widehat R/\xx\widehat R\\
&\cong(\widehat M\otimes_{\widehat R}\omega)\lten_{\widehat R}\widehat R/\xx\widehat R
\cong(\widehat M\otimes_{\widehat R}\omega)\otimes_{\widehat R}\widehat R/\xx\widehat R
\cong M\otimes_R\omega/\xx\omega.
\end{align*}
We obtain $\Tor_{>0}^R(M,\omega/\xx\omega)=0$.
Since $\omega/\xx\omega$ has finite length as an $\widehat R$-module, it has finite length as an $R$-module.
In particular, $\omega/\xx\omega$ is a finitely generated $R$-module.
As $R$ satisfies $\dep$, we have
$$
0\le\depth_R(M\otimes_R\omega/\xx\omega)=\depth_RM+\depth_R \omega/\xx\omega-\depth R=0+0-d=-d<0.
$$
This contradiction completes the proof of the proposition.
\end{proof}

In the theorem below we investigate the relationship between $\dep$ and the vanishing of Ext modules.

\begin{thm}\label{26}
Let $(R,\m,k)$ be a Cohen--Macaulay local ring of dimension $d>0$ with a canonical module $\omega$.
If $R$ satisfies $\dep$, then for all two finitely generated $R$-modules $M$ and $N$ such that $\Ext_R^{\gg0}(M,N)=0$ one has $\Ext_R^{>d}(M,N)=0$.
The converse holds true if $R$ is Gorenstein.
\end{thm}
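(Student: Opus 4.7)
The proof splits into two directions. For the converse (assuming $R$ is Gorenstein), the hypothesis that $\Ext_R^{\gg 0}(M,N)=0$ implies $\Ext_R^{>d}(M,N)=0$ for all finitely generated $M,N$ is, by the very definition of the property $\ab$, exactly the statement that $R$ is an AB ring (with AB-constant at most $d$). Theorem~\ref{27} then immediately yields that $R$ satisfies $\dep$, completing this direction.

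For the forward direction, assume $R$ satisfies $\dep$; by Proposition~\ref{17}, $R$ also satisfies $\TR$. The plan is to reduce to the maximal Cohen--Macaulay case via syzygies and then exploit canonical-module duality. Setting $M'=\syz_R^d M$ (MCM), the shift $\Ext_R^i(M',N)\cong\Ext_R^{i+d}(M,N)$ for $i\ge 1$ turns the goal $\Ext_R^{>d}(M,N)=0$ into $\Ext_R^{>0}(M',N)=0$. Using an Auslander--Buchweitz maximal Cohen--Macaulay approximation of $N$, whose finite-injective-dimension part contributes only to $\Ext_R^i$ for small $i$, we may further assume $N$ is also MCM. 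With $M'$ and $N$ both MCM, the canonical-module duality gives a derived-category isomorphism
\[
\rhom_R(M',N)\;\cong\;\rhom_R\bigl(M'\lten_R N^\vee,\,\omega\bigr),
\]
where $N^\vee=\Hom_R(N,\omega)$ is MCM. Since $\omega$ is dualizing with $\id_R\omega=d$, the hyper-Ext spectral sequence
\[
E_2^{p,q}=\Ext_R^p\bigl(\Tor_q^R(M',N^\vee),\,\omega\bigr)\Longrightarrow\Ext_R^{p+q}(M',N)
\]
converts $\Ext_R^{\gg 0}(M',N)=0$ into $\Tor_{\gg 0}^R(M',N^\vee)=0$. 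Taking $s\ge d$ with $\Tor_{>s}^R(M',N^\vee)=0$, the MCM syzygy $\syz_R^s M'$ satisfies $\Tor_{>0}^R(\syz_R^s M',N^\vee)=0$, so $\dep$ gives $\syz_R^s M'\otimes_R N^\vee$ MCM, and running the duality in reverse produces $\Ext_R^{>s}(M',N)=0$.

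The main obstacle is to sharpen this bound from $s$ down to $d$, which amounts to showing $\Tor_{>0}^R(M',N^\vee)=0$. The plan is to analyze the above spectral sequence using $\id_R\omega=d$ together with the depth constraints coming from $\dep$ and $\TR$ (via Theorem~\ref{22}): a surviving $\Tor_a^R(M',N^\vee)$ with $a\ge 1$ would produce contributions in the top row $p=d$ that are incompatible with the Ext-vanishing in the abutment and the positive-depth criterion imposed by $\TR$, forcing $\Tor_{>0}^R(M',N^\vee)=0$. The spectral sequence then degenerates to the row $q=0$ where $p$ ranges over $[0,d]$, so $\id_R\omega=d$ yields $\Ext_R^{>d}(M',N)=0$ as required.
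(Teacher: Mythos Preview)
Your converse direction is circular: Theorem~\ref{27} is deduced in this paper \emph{from} Theorem~\ref{26}, so you cannot invoke it here. The paper instead cites \cite[Corollary~5.3(b)]{CJ} directly, which gives that every AB ring satisfies $\dep$. This is easily repaired.

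The forward direction has a genuine gap at exactly the point you label the ``main obstacle''. Your reductions (replacing $M$ by $M'=\syz^d M$, replacing $N$ by its MCM approximation, and the derived isomorphism $\rhom_R(M',N)\cong\rhom_R(M'\lten_R N^\vee,\omega)$) are fine, and the dualizing property of $\omega$ does give $\Tor_{\gg0}^R(M',N^\vee)=0$ from $\Ext_R^{\gg0}(M',N)=0$. But the spectral sequence does \emph{not} force $\Tor_{>0}^R(M',N^\vee)=0$. If $a$ is the top nonvanishing $\Tor$, the corner term $E_2^{d,a}=\Ext_R^d(\Tor_a,\omega)$ survives to $E_\infty$, and its vanishing (from $\Ext_R^{d+a}(M',N)=0$) tells you only that $\depth\Tor_a^R(M',N^\vee)>0$, not that $\Tor_a=0$. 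Your invocation of $\TR$ via Theorem~\ref{22} does not help: $\TR$ concerns modules $X$ with $\Ext_R^{>0}(X,R)=0$, and there is no reason the relevant $\Tor$-modules satisfy this hypothesis. Thus the argument you sketch does not close, and no amount of iteration of the spectral sequence fixes it without further input.

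The paper supplies precisely the missing ingredient. Rather than working with $M'$ directly, one first passes (after reducing to a situation where $\Ext_R^{>1}\ne0$ but $\Ext_R^1\ne0$) to a module $L$ that is MCM \emph{and locally free on the punctured spectrum}: take $K/\xx K$ for a system of parameters $\xx$ and then $L=\syz^d(K/\xx K)$. Local freeness on the punctured spectrum forces each $\Tor_i^R(L,N^\dag)$ to have finite length, and then \cite[Lemma~3.5(2)]{KOT} gives a direct isomorphism $\Tor_i^R(L,N^\dag)\cong\Ext_R^{d+i}(L,N)^\vee$ for $i>0$, converting the Ext vanishing \emph{cleanly} into $\Tor_{>0}^R(L,N^\dag)=0$. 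Now $\dep$ makes $L\otimes_R N^\dag$ MCM, and \cite[Proposition~2.5]{KOT} yields $\Ext_R^i(L,N)=0$ for $1\le i\le d$, the desired contradiction. The locally-free-on-punctured-spectrum trick is exactly what turns ``positive depth'' into ``zero'' for the Tor-modules, which is the step your spectral sequence approach cannot supply.
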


\begin{proof}
A Gorenstein local ring satisfying the condition in the assertion about modules $M,N$ is an AB ring.
It is shown in \cite[Corollary 5.3(b)]{CJ} that every AB ring satisfies $\dep$.

Assume that $R$ satisfies $\dep$.
Let $n\ge0$ be an integer, and let $M$ and $N$ be finitely generated $R$-modules such that $\Ext_R^{>n}(M,N)=0$ and $\Ext_R^{n}(M,N)\ne 0$.

We claim that if $M$ and $N$ are maximal Cohen--Macaulay $R$-modules, then $n=0$.
In fact, assume $n>0$.
Let $\xx=x_1,\dots,x_d$ be a system of parameters of $R$.
This is a regular sequence on $K:=\syz^{n-1}M$.
Since $\Ext_R^{>1}(K,N)=0$ and $\Ext_R^1(K,N)\ne 0$, an analogous argument as in the proof of Lemma \ref{1}(2) shows that $\Ext_R^{>d+1}(K/\xx K,N)=0$ and $\Ext_R^{d+1}(K/\xx K,N)\ne 0$.
The $d$th syzygy $L=\syz_R^d(K/\xx K)$ is a maximal Cohen--Macaulay $R$-module such that $\Ext_R^{>1}(L,N)=0$ and $\Ext_R^1(L,N)\ne 0$.
As $L$ is locally free on the punctured spectrum of $R$ and $d>0$, by \cite[Lemma 3.5(2)]{KOT} we get $\Tor_i^R(L,N^\dag)\cong\Ext_R^{d+i}(L,N)^\vee=0$ for all $i>0$, where $(-)^\dag=\Hom_R(-,\omega)$ is the canonical dual and $(-)^\vee=\Hom_R(-,\EE_R(k))$ is the Matlis dual.
Since $R$ satisfies $\dep$, we have $\depth(L\otimes_RN^\dag)=\depth L+\depth N^\dag-\depth R=d$, and hence $L\otimes_RN^\dag$ is maximal Cohen--Macaulay.
It follows from \cite[Proposition 2.5]{KOT} that $\Ext_R^{i}(L,N)=0$ for all integers $1\le i\le d$.
This contradicts the fact that $\Ext_R^1(L,N)\ne 0$; recall that $d>0$.
Thus we must have $n=0$.
The claim follows.

As $R$ is a Cohen--Macaulay local ring with a canonical module, there exists a {\em maximal Cohen--Macaulay approximation} of $N$, that is to say, an exact sequence $0\to Y\to X\to N\to 0$ of finitely generated $R$-modules such that $Y$ has finite injective dimension and $X$ is maximal Cohen--Macaulay; see \cite[Theorem 11.17]{LW}.
Since $H:=\syz^dM$ is maximal Cohen--Macaulay, we have $\Ext_R^{>0}(H,Y)=0$.
We see that $\Ext_R^{i}(H,X)\cong\Ext_R^{i}(H,N)$ for all $i>0$, and that $\Ext_R^i(H,N)\cong\Ext_R^{i+d}(M,N)=0$ for all $i\gg0$.
The above claim implies $\Ext_R^{>0}(H,X)=0$, whence $\Ext_R^{>0}(H,N)=0$ and $\Ext_R^{>d}(M,N)=0$.
This completes the proof of the theorem.
\end{proof}

\section{Proofs of the main results}

We give proofs of the five results stated in the Introduction.
The first theorem is shown by a short proof and independent of the results given in the previous sections.
A theorem of Heitmann \cite{He} plays a key role.

\begin{proof}[Proof of Theorem \ref{0}]
(1) Consider the formal power series ring $B=A[\![X_1,\dots,X_n]\!]$, where $n:=\rho-\alpha\ge0$.
Then $B$ is complete (see \cite[Exercise 8.6]{M}), $B/\m_B\cong A/\m_A=k$, $\ch B=\ch A=\ch k$, and $\depth B=\rho\ge2$.
Since $A$ contains a field, any integer $m$ such that $m\cdot1\ne0$ in $R$ is a non-zerodivisor of $R$.
By virtue of \cite[Main Theorem]{He}, there exists a local unique factorization domain $R$ having an isolated singularity such that $\widehat R\cong B$.
We have $\depth R=\rho$, $\ch R=\ch B$ and $R/\m_R\cong B/\m_B$.
As $A$ does not satisfy $\P$, neither does $B$, and neither does $R$.
Since $A$ satisfies $\Q$, so does $B$, and so does $R$.

(2) Apply the proof of (1) to $n:=\sigma-\alpha\ge0$; note that $\depth B=\sigma\ge1$.
Then we find a local domain $S$ having an isolated singularity such that $\widehat S\cong B$, and observe that $S$ does not satisfy $\P$ but satisfies $\Q$, $\depth S=\sigma$ and $\ch S=\ch A=\ch k$.
\end{proof}

\begin{proof}[Proof of Theorem \ref{27}]
The assertion immediately follows from Theorem \ref{26} and \cite[Proposition 3.2]{HJ}.
\end{proof}

\begin{proof}[Proof of Corollary \ref{4}]
By virtue of \cite[Corollary 4.2]{JS5}, there exists an artinian Gorenstein equicharacteristic local ring $A$ which does not satisfy $\ee$.
Note then that $A$ is complete.
Set $\P=\ee$ and $\Q=\gor$.
Theorem \ref{14} says that $\P$ and $\Q$ satisfy the implications \eqref{6}.
The assertion follows from Theorem \ref{0}.
The last assertions of (1) and (2) are consequences of the first ones and Theorem \ref{27}.
\end{proof}

\begin{proof}[Proof of Theorem \ref{29}]
The assertion is an immediate consequence of Propositions \ref{9} and \ref{17}.
\end{proof}

\begin{proof}[Proof of Corollary \ref{5}]
By \cite[Theorem 1.7]{JS6}, there is an artinian equicharacteristic local ring $A$ which does not satisfy $\TR$.
Then $A$ is complete and Cohen--Macaulay.
Let $\P=\TR$ and $\Q=\cm$.
Theorem \ref{14} says $\P,\Q$ satisfy \eqref{6}.
The first assertions of (1) and (2) follow by Theorem \ref{0}.
The last assertions of (1) and (2) are consequences of the first ones and Theorem \ref{29}.
\end{proof}

Combining Theorem \ref{22} with Corollary \ref{5}, we obtain the following result.

\begin{cor}
Let $k$ be a field which is not algebraic over a finite field.
\begin{enumerate}[\rm(1)]
\item
For each $d\ge2$, there exists a $d$-dimensional Cohen--Macaulay equicharacteristic local unique factorization domain $(R,\m_R,k)$ with an isolated singularity which admits a finitely generated module $M$ such that $\Ext_R^{>0}(M,R)=0$ and $\depth M=0$.
\item
There exists a $1$-dimensional Cohen--Macaulay equicharacteristic local domain $(S,\m_S,k)$ which admits a finitely generated module $N$ such that $\Ext_S^{>0}(N,S)=0$ and $\depth N=0$.
\end{enumerate}
In particular, both {\rm(1)} and {\rm(2)} give negative answers to Question \ref{18}.
\end{cor}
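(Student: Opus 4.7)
The plan is to derive this corollary by a direct combination of the two results mentioned. First I would invoke Corollary \ref{5}: for each $d\ge 2$, this produces a $d$-dimensional Cohen--Macaulay equicharacteristic local unique factorization domain $(R,\m_R,k)$ with an isolated singularity that does not satisfy $\TR$, and it likewise produces a $1$-dimensional Cohen--Macaulay equicharacteristic local domain $(S,\m_S,k)$ with an isolated singularity that does not satisfy $\TR$.

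Next I would observe that both rings have positive depth: as they are Cohen--Macaulay of dimensions $d\ge 2$ and $1$ respectively, we have $\depth R=d\ge 1$ and $\depth S=1$. Thus Theorem \ref{22} applies to each of them. Since $R$ (resp.\ $S$) fails to satisfy condition (1) of Theorem \ref{22}, it must also fail condition (2). Unpacking the contrapositive, this yields a finitely generated $R$-module $M$ (resp.\ $S$-module $N$) with $\Ext_R^{>0}(M,R)=0$ and $\depth M=0$ (resp.\ $\Ext_S^{>0}(N,S)=0$ and $\depth N=0$). This establishes both (1) and (2).

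For the final sentence, I would simply note that Question \ref{18} asks whether every finitely generated module $M$ over a local ring of positive depth with $\Ext_R^{>0}(M,R)=0$ must itself have positive depth; the modules $M$ and $N$ constructed above satisfy all the hypotheses of that question yet have depth zero, so they furnish explicit counterexamples. No obstacle arises since this is essentially a bookkeeping argument: everything substantive was already proved in Theorem \ref{22} and Corollary \ref{5}.
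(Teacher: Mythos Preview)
Your proof is correct and follows exactly the approach indicated in the paper, which simply states that the result is obtained by combining Theorem \ref{22} with Corollary \ref{5}. Your explicit unpacking of the contrapositive of Theorem \ref{22} and verification of the positive depth hypothesis are precisely what is needed.
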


In a similar way, it is actually possible to get more results in the same context.
For instance, let $A=k[X,Y,Z]/(X^2-Y^2,X^2-Z^2,XY,XZ,YZ)$ be a residue ring of a polynomial ring over a field $k$.
Then $A$ is an artinian Gorenstein non-complete intersection equicharacteristic local ring with embedding dimension $3$ and multiplicity $5$.
It is seen from \cite[Theorem 3.4(2) and Lemma 3.7]{HJ} that $A$ satisfies $\gap$ and $\te$.
Applying Theorems \ref{0} and \ref{14} to this ring $A$ and the properties $\P=\ci$ and $\Q=\gap\land\te$ yields:

\begin{cor}
Let $k$ be a field.
Then the following two statements hold true.
\begin{enumerate}[\rm(1)]
\item
For any $d\ge2$, there exists a $d$-dimensional equicharacteristic local unique factorization domain $(R,\m_R,k)$ with an isolated singularity which is not a complete intersection but satisfies both $\gap$ and $\te$.
\item
There exists a $1$-dimensional equicharacteristic local domain $(S,\m_S,k)$ which is not a complete intersection but satisfies both $\gap$ and $\te$.
\end{enumerate}
\end{cor}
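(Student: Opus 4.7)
The plan is to apply Theorem \ref{0} to the artinian ring $A=k[X,Y,Z]/(X^2-Y^2,X^2-Z^2,XY,XZ,YZ)$ with the pair of properties $\P=\ci$ and $\Q=\gap\wedge\te$, i.e., $\Q$ is the property ``satisfies both $\gap$ and $\te$''. The corollary will then follow by unpacking the conclusion.

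First I would verify the hypotheses of Theorem \ref{0}. The ring $A$ is artinian, hence complete, equicharacteristic, with residue field $k$, and $\alpha:=\depth A=0$. By construction $A$ is not a complete intersection, so $A$ fails $\P=\ci$. The paragraph preceding the corollary records that $A$ satisfies $\gap$ and $\te$ by \cite[Theorem 3.4(2) and Lemma 3.7]{HJ}, hence $A$ satisfies $\Q$.

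Next I would check that the pair $(\P,\Q)$ satisfies the implication scheme \eqref{6}. By Theorem \ref{14}, each of $\ci$, $\gap$, $\te$ satisfies the biconditional \eqref{12}, which is strictly stronger than \eqref{6}. In particular $\P=\ci$ satisfies \eqref{6}. Moreover, the conjunction of two properties each satisfying \eqref{12} clearly again satisfies \eqref{12}, so $\Q=\gap\wedge\te$ satisfies \eqref{12}, and therefore \eqref{6}. This is the only part of the argument that is not a direct citation, but it is immediate from the definitions.

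Finally I would invoke Theorem \ref{0}. For (1), given $d\ge 2$, set $\rho=d\ge\max\{2,\alpha\}=2$; Theorem \ref{0}(1) yields an equicharacteristic local UFD $(R,\m_R,k)$ of depth $d$ with an isolated singularity which fails $\ci$ but satisfies both $\gap$ and $\te$. Since $R$ satisfies $\gap$ it is Gorenstein, in particular Cohen--Macaulay, so $\dim R=\depth R=d$. For (2), take $\sigma=1\ge\max\{1,\alpha\}=1$ in Theorem \ref{0}(2) to obtain an equicharacteristic local domain $(S,\m_S,k)$ of depth $1$ with the analogous properties, again Cohen--Macaulay of dimension $1$. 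The main obstacle, such as it is, is purely bookkeeping: tracking that the chosen $A$, $\P$, $\Q$ fit the framework of Theorem \ref{0} and that the conjunction $\gap\wedge\te$ inherits the stability property \eqref{12} from its factors.
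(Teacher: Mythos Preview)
Your proposal is correct and follows essentially the same approach as the paper: the paper applies Theorems \ref{0} and \ref{14} to the ring $A=k[X,Y,Z]/(X^2-Y^2,X^2-Z^2,XY,XZ,YZ)$ with $\P=\ci$ and $\Q=\gap\land\te$, exactly as you do. Your additional remarks---that the conjunction $\gap\wedge\te$ inherits \eqref{12} from its factors, and that $\gap$ forces Gorensteinness so that $\dim R=\depth R=d$---make explicit what the paper leaves tacit, but the argument is the same.
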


No counterexample has been found so far to each of the implications $\gap\Rightarrow\ab\Rightarrow\ee\Leftarrow\te$ and $\tac\Rightarrow\TR$.
Once a counterexample of an artinian equicharacteristic local ring is found, one can lift it to a counterexample of a (unique factorization) domain with an isolated singularity by Theorems \ref{0} and \ref{14}.

Finally, we give some comments on \cite{Y}.

\begin{rem}
Corollary \ref{5} says that the assertions of \cite[Theorem 1.1 and Corollary 1.3]{Y} are both incorrect.
In their proofs, \cite[Theorem 8.5]{Y} plays an essential role, and the authors wonder if the proof of \cite[Theorem 8.5]{Y} contains gaps.
To be more precise, the following sentence is given in \cite[page 132, lines 8--9]{Y}.
\begin{quote}
Similarly, $\varphi_n^F : \widetilde{F}\to X_0$
is represented by $(0\ \varphi_{n-1}^F):F_{n-1} [n-1]\oplus \widetilde{F^\prime}\to X_0$.
\end{quote}
We are not sure why this sentence is true.
If it were true, by induction $\varphi_n^F$ would be represented by $(0\ 0\ \cdots\ 0\ p_0^F): F_{n-1}[n-1]\oplus F_{n-2}[n-2]\oplus\cdots\oplus F_1[1]\oplus F_0 \to
X_0$, but we feel that this claim is too strong.
In fact, for example, in \cite[Lemma 9.4]{Y}, to deduce that $\varphi_n^F$ has such a representation, a very strong assumption is imposed on those complexes for which contraction is taken.
Other than this, a lot of identifications are done in the proof of \cite[Theorem 8.5]{Y}, about whose correctness we are not sure.
The classical proof that the octahedral axiom holds in the homotopy category would give that $\varphi_n^F:\widetilde{F}\to X_0$ is represented by
$$
(p_{n-1}^F[n-1]\ \ \varphi_{n-1}^F):F_{n-1}[n-1]\oplus\widetilde{F'}\to X_0
$$
instead of $(0\ \varphi_{n-1}^F)$, noting one of the identifications here is to identify $X_0$ with $\cone(\psi_{n-1}^F)$ which as an underlying graded $R$-module is $X_{n-1}[n-1]\oplus\widetilde{F'}$.
To be more precise, inductively
there is a diagram:
$$
\xymatrix{
X_{n-1}[n-2]\ar[r]^-{\psi_{n-1}^F}\ar@{=}[d]& \widetilde{F'}\ar[r]^-{\varphi_{n-1}^F}\ar@{=}[d]& X_0\ar[r]^{\widetilde{\omega_{n-1}^F}}\ar@<-2mm>[d]_-{\delta_{n-1}^F}& X_{n-1}[n-1]\ar@{=}[d]\\
X_{n-1}[n-2]\ar[r]^-{\psi_{n-1}^F}& \widetilde{F'}\ar[r]^-{\lambda_{n-1}^F}& \cone(\psi_{n-1}^F)\ar[r]^-{\mu_{n-1}^F}\ar@<-2mm>[u]_-{\epsilon_{n-1}^F}& X_{n-1}[n-1]
}
$$
where $\delta_{n-1}^F$ and $\epsilon_{n-1}^F$ are homotopy inverses and where the bottom row is the natural strict
triangle, which identifies the top row as a triangle in $\K(R)$.
Then the map $\varphi_n^F$ is given by $(\varepsilon_{n-1}^F\circ\binom{p_{n-1}^F[n-1]}{0}\ \ \varphi_{n-1}^F)$ from the standard proof that the octahedral axiom holds, but importantly, with this description of $\varphi_n^F$, the diagram in the proof of \cite[Theorem 8.5]{Y} does not obviously commute, and should not commute in general.
At least many of the remaining identifications made in the proof of \cite[Theorem 8.5]{Y} could be similarly avoided by keeping track of the relevant isomorphisms between the given triangles and their strict counterparts.
So the key point is that the correct inductive description of the map $\varphi_n^F$ is one where the relevant diagram does not commute in general.

On the other hand, the ring $R$ produced by Corollary \ref{5} is not excellent.
So, even if the proof of \cite[Theorem 8.5]{Y} contains gaps, the assertion itself may be true in the case where the base ring is excellent.
However, the theory developed in \cite{Y} does not seem to be related to the excellence of the base ring, so even if the assertion of \cite[Theorem 8.5]{Y} is true for excellent rings, we would need another approach to show it.
\end{rem}


\end{document}